\documentclass[12pt]{article}
\usepackage{amssymb,amsmath, amsthm}
\usepackage{color,xcolor}
\usepackage[normalem]{ulem}
\setlength{\headsep}{-5pt}
\setlength{\headheight}{-5pt}
\setlength{\textheight}{9in}
\setlength{\oddsidemargin       }{5pt}
\setlength{\textwidth}{435pt}
\setlength{\parskip}{8pt plus 2pt minus 1pt}
\setlength{\leftmargin}{5pt}
\newcommand{\eb}{\begin{equation}}
\newcommand{\ee}{\end{equation}}
\newcommand{\ebx}{\begin{equation*}}
\newcommand{\eex}{\end{equation*}}
\newcommand{\delete}[1]{{\color{red}\ifmmode\text{\sout{\ensuremath{#1}}}\else\sout{#1}\fi}}

\newtheorem{lemma}{Lemma}[section]
\newtheorem{proposition}[lemma]{Proposition}
\newtheorem{theorem}[lemma]{Theorem}
\newtheorem{example}[lemma]{Example}
\newtheorem{corollary}[lemma]{Corollary}
\newtheorem{definition}[lemma]{Definition}

\allowdisplaybreaks

\makeatletter
\renewcommand*\env@matrix[1][*\c@MaxMatrixCols c]{%
  \hskip -\arraycolsep
  \let\@ifnextchar\new@ifnextchar
  \array{#1}}
\makeatother

\begin{document}
	
	\title{Hermitian Rank and Rigidity of Holomorphic Mappings}
	\author{Yun Gao\footnote{School of Mathematical Sciences, Shanghai Jiao Tong University, Shanghai, People's Republic of China. \textbf{Email:}~gaoyunmath@sjtu.edu.cn}, }

	\maketitle
	
	\begin{abstract}
	Huang's Lemma is an important tool in CR geometry to study rigidity
	problems. This paper introduces a generalization of Huang's Lemma 
		 based on the rigidity properties of holomorphic mappings preserving certain orthogonality on projective spaces, which is optimal for the case of partial linearity. By exploring the intricate relationship between rigidity  and Huang's Lemma, we establish that the rigidity properties of Segre maps or proper holomorphic mappings between generalized balls with Levi-degenerate boundaries can be inferred from those between generalized balls with Levi-non-degenerate boundaries by in a coordinate-free and more geometric manner.

		\end{abstract}

\section{Introduction}

The rigidity problem of mappings between hypersurfaces embedded in
complex Euclidean spaces has attracted much attention  since the
pioneer work of Poincar\'e (\cite {Po}) and Alexander (\cite{Al}). Early  work had been
mainly focused on holomorphic mappings between spheres of the same
dimension. In 1978, Webster in \cite{We}  studied   proper
holomorphic mappings between balls with complex co-dimension one.
Since then, important progress has been made towards the
understanding of proper holomorphic mappings between unit balls in
complex spaces of different dimensions. (See, for instance,
\cite{Fr,CS,St,Hu,HJY,HLTX,DX,NZ,GN, XY} and many references therein.)

In a striking paper \cite{Hu}, Huang proved that a holomorphic map
$F$ from $\mathbb B^n$ to $\mathbb B^N$ with $n\le N\le 2n-2$, that
is $C^2$-smooth up to the boundary, is a totally geodesic embedding
with respect to Bergman metrics, that solves a long standing open
question  proposed by Cima-Suffridge \cite{CS} and Forstneri\v{c}
\cite{Fr}. A fundamental novelty of the work in \cite{Hu} is the
discovery of a lemma nowadays called in the literature
\textit{Huang's Lemma} stated as follows:

\noindent\textbf{Huang's Lemma}~\cite{Hu}. {\it Let $\{\phi_j\}$ and
$\{\psi_j\}$ be holomorphic functions in $z\in \mathbb C^n$ ($n\ge
2$) near the origin. Assume that $\phi_j(0)=\psi_j(0)=0$. Let
$H(z,\bar z)$ be a complex-valued real-analytic function for $z
\approx 0$ such that
\begin{equation}\label{Huang}
\sum_{j=1}^k\phi_j(z)\overline{\psi_j(z)}=\|z\|^2H(z,\bar z).
\end{equation}
When $1\le k\le n-1$, then $H(z,\bar z)\equiv 0$. }

Huang's Lemma and its various versions have become an indispensable
tool in the later  studies  of holomorphic mappings and have been
used to obtain   many deep results. (See in \cite{BEH}, \cite{BH},
\cite{EHZ}, \cite{HLTX}, \cite{HJY2}, etc.) For instance, in a paper of
Ebenfelt-Shroff ~\cite{ES}, a generalized Huang's Lemma  was used to
derive various degeneracy of holomorphic mappings from a
Levi-nondegenerate CR hypersurface into a hyperquadric with small
signature difference. In a recent paper \cite{Xi}, Xiao considered
Huang's lemma  in the case of  $k=n$ and proved that if $H(z,\bar
z)\not\equiv 0$, then both vector-valued holomorphic functions
$\phi$ and $\psi$ are linear up to  holomorphic factors.
 This result  generalized for the first time  Huang's Lemma to the case
  where the relevant
  maps are not degenerate.

Recall that a real-valued real-analytic function $H(z,\bar z)$ of
finite rank  is the one which  can be written as $H(z,\bar
z)=\sum_{i=1}^p|f_i(z)|^2-\sum_{j=1}^q|g_j(z)|^2$, $p,q\in \mathbb
N$, where $f_i$ and $g_i$ are $\mathbb C$-linearly independent
holomorphic functions. Here $p+q$ is called the rank of $H(z,\bar
z)$. One can verify that the rank of $H(z,\bar z)$ is independent of
the choice of $f_i$'s and $g_i$'s. The concept of Hermitian ranks
has profound connections with various mapping problems in several
complex variables, which originated from  the Hilbert 17th problem.
For instance, Ebenfelt \cite{Eb} proposed the SOS (sums of squares)
conjecture, which relates Hermitian ranks with a well-known gap
conjecture of Huang-Ji-Yin \cite{HJY} for proper holomorphic
mappings between balls. The rank of $H(z,\bar z)$ is zero if and only if $H(z,\bar z)$ is identically zero, which implies the corresponding map is degenerate.

In this paper, we  study a new version  of Huang's Lemma in the case
of arbitrary signature and higher dimension difference by the method through orthogonal pairs introduced in \cite{Ga}. The starting
point of our approach to Huang's lemma is to homogenize and polarize
Eq(\ref{Huang}), which gives us an \textit{orthogonal pair}
(Definition \ref{orth}) $F:=(\phi,\psi)$ between two projection
spaces. An orthogonal pair is a pair of holomorphic maps preserving
certain orthogonality defined on  projective spaces using Hermitian
forms ( which may be degenerate), which is the generalization of Segre map. 
Hence, we can use the results about orthogonal pairs in \cite{Ga}. However, in \cite{Ga},
most cases handled were with a non-degenerate Hermitian form in the
source projective space. In this article, we need to handle
degenerate cases as degeneracy  always shows up when  new variables
are  introduced in the process of homogenization of Eq
(\ref{Huang}). Basing on the rigidities of orthogonal pairs, we can extend Huang's Lemma.

Our first main result is as follows:
\begin{theorem}\label{pair}
For  $r, s\in{\mathbb Z_{\ge 0}}, n'\in \mathbb N$, let
$\{\phi_j\}_{j=1}^{n'},$ $\{\psi_j\}_{j=1}^{n'}$ be holomorphic
functions in $ z\in \mathbb C^{r+s}$ near the origin and let
$H(z,\bar z)$ be a complex-valued real-analytic function such that
$$\sum_{j=1}^{n'}\phi_j(z)\overline{\psi_j(z)}=\|z\|_{r,s}^2H(z,\bar z).$$
 If $r+s\leq n'\le 2(r+s)-3$ and $H(z,\bar z) \not\equiv 0$, then there exist  holomorphic
functions $h_1(z)$ and $h_2(z)$ such that $H(z,\bar
z)=h_1(z)\overline{h_2(z)}$. Moreover there exist holomorphic
functions $\{\Phi_i\}^q_{i=1}$, $\{\Psi_i\}^q_{i=1}$, where
$q=n'-r-s$ and  matrices $B$, $C$ satisfying $\overline
C^TB=H_{r,s,q}$ such that
$$
[\phi_1,\cdots,\phi_{n'}]^T=B[h_1(z)z_1,\cdots, h_1(z)z_{r+s},\Phi_1,\cdots,\Phi_q]^T,
$$
$$
[\psi_1,\cdots,\psi_{n'}]^T=C[h_2(z)z_1,\cdots,  h_2(z)z_{r+s},\Psi_1,\cdots,\Psi_q]^T.
$$

\end{theorem}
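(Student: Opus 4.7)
The plan is to polarize the identity, interpret it as an orthogonality-preserving relation between projective spaces, and then apply (a degenerate-target extension of) the rigidity theory for \emph{orthogonal pairs} introduced in \cite{Ga}.

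First I would polarize the hypothesis by formally replacing $\bar z$ with an independent variable $\bar w$. Expanding $H(z,\bar z)=\sum c_{\alpha\beta}z^\alpha\bar z^\beta$ and setting $\tilde H(z,\bar w)=\sum c_{\alpha\beta}z^\alpha\bar w^\beta$, the hypothesis becomes
\[
\sum_{j=1}^{n'}\phi_j(z)\overline{\psi_j(w)}=\langle z,w\rangle_{r,s}\,\tilde H(z,\bar w),\qquad \langle z,w\rangle_{r,s}=\sum_{i=1}^{r}z_i\bar w_i-\sum_{i=r+1}^{r+s}z_i\bar w_i.
\]
In particular, whenever $\langle z,w\rangle_{r,s}=0$ the vectors $\phi(z)$ and $\psi(w)$ are orthogonal in the standard Hermitian form on $\mathbb C^{n'}$. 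Hence $F=[\phi_1:\cdots:\phi_{n'}]$ and $G=[\psi_1:\cdots:\psi_{n'}]$ constitute an orthogonal pair, in the sense of Definition \ref{orth}, from $\mathbb P(\mathbb C^{r+s})$ equipped with the signature-$(r,s)$ form to $\mathbb P(\mathbb C^{n'})$ equipped with the Euclidean form.

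Next I would invoke the rigidity theorem for orthogonal pairs from \cite{Ga}. Setting $q:=n'-(r+s)$, the hypothesis $r+s\le n'\le 2(r+s)-3$ translates to $0\le q\le (r+s)-3$, which is the partial-linearity regime. In this range I expect the orthogonal pair to split, after a linear change of coordinates on the target, into a rigid linear block of $r+s$ components carrying the signature and a \emph{free} block of $q$ components carrying no form. This would yield matrices $B,C$ with $\overline C^T B=H_{r,s,q}=\mathrm{diag}(I_r,-I_s,0_q)$, scalar holomorphic factors $h_1,h_2$, and holomorphic components $\Phi_1,\dots,\Phi_q$, $\Psi_1,\dots,\Psi_q$ producing the stated factorizations of $\phi$ and $\psi$. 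Plugging these back in, $\phi(z)^T\overline{\psi(z)}=v_1^T B^T\overline C\,\overline{v_2}=v_1^T H_{r,s,q}\overline{v_2}$ annihilates the $\Phi_i\overline{\Psi_i}$ terms via the zero diagonal block and equals $\|z\|^2_{r,s}\,h_1(z)\overline{h_2(z)}$, forcing $H(z,\bar z)=h_1(z)\overline{h_2(z)}$ (and in particular $H\not\equiv 0$ forces $h_1,h_2\not\equiv 0$, legitimizing the split).

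The principal obstacle is the rigidity step. The theorems of \cite{Ga} are stated primarily under the assumption of non-degenerate Hermitian forms on both the source and the target, whereas the splitting sought here equips the target with a form of type $(r,s,q)$ possessing a zero block of dimension $q$. Extending the orthogonal-pair rigidity of \cite{Ga} to this partially-degenerate situation, and verifying that the codimension bound $q\le (r+s)-3$ is precisely what forces a clean rigid-plus-free decomposition, is the main technical content required; this matches the introduction's remark that degeneracy ``always shows up when new variables are introduced in the process of homogenization.''
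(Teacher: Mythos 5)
Your overall strategy---polarize, reinterpret the identity as an orthogonality-preserving pair, and invoke rigidity of orthogonal pairs---is the same as the paper's, but there are two genuine gaps. First, you skip homogenization, and without it your ``orthogonal pair'' is not well defined on projective space: the $\phi_j,\psi_j$ are arbitrary holomorphic functions near the origin, not homogeneous of a common degree, so $[\phi_1:\cdots:\phi_{n'}]$ does not descend to $\mathbb P(\mathbb C^{r+s})$, and the locus $\langle z,w\rangle_{r,s}=0$ is likewise not a condition on points of $\mathbb P^{r+s-1}$. The paper first adjoins a homogenizing variable $x_{n+1}$ (Section 2), which realizes the source as $\mathbb P^{r,s,1}$---a projective space carrying a \emph{degenerate} Hermitian form, since $x_{n+1}$ does not enter the form. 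This is unavoidable, and it is the source, not the target, that becomes degenerate: the target stays $\mathbb P^{n',0}$ with its positive definite form throughout.

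Second, and consequently, the rigidity input you appeal to is not available off the shelf. Theorem~\ref{Segre} (from \cite{Ga}) covers orthogonal pairs from the non-degenerate $\mathbb P^{r,s}$ to $\mathbb P^{n',0}$ only; what the proof actually needs is the same conclusion for pairs from $\mathbb P^{r,s,1}$, and supplying this reduction is the main new content of the paper (Theorem~\ref{degenerate pair}), proved by restricting to generic $(r,s)$-subspaces, characterizing quasi-standardness via the multiplier (Proposition~\ref{map rank}), and gluing the slice-wise factorizations $H|_V=f_V\overline{g_V}$ into a global one via the $\partial\bar\partial\log$ argument of Lemma~\ref{restriction}. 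Your closing paragraph correctly flags that an extension of \cite{Ga} is ``the main technical content required,'' but it mislocates the degeneracy (in the target rather than the source) and does not supply the argument, so the central step of the proof is missing. Your final block---unwinding quasi-standardness into the matrices $B,C$ with $\overline C^TB=H_{r,s,q}$ and reading off $H=h_1\overline{h_2}$---does match the paper's Proposition~\ref{pair rank} and is fine.
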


Theorem \ref{pair} serves as a generalization of many well-known results. Specifically, when $n'=r+s$, it pertains to the result proved by Xiao in~\cite{Xi}. The proof of this theorem will be presented in Section 4. 

The following example which is called generalized Whitney map in \cite{HX} demonstrates that if $n'= 2(r+s) - 2$, then the result fails. Consequently, Theorem \ref{pair} is optimal for characterizing partial linearity.

\begin{example}For $r+s=n$ and $n'=2n-2$, let 
$$\left\{ \begin{array}{lll}
	\phi_j=\psi_j=z_1z_j,& & 1\le j\le r\\
	\phi_j=-\psi_j=z_1z_j,& & r+1\le j\le n-1,\\ 
	\phi_j=\psi_j=z_{n}z_{j+2-n},&& n \le j\le n+r-2\\
	\phi_j=-\psi_j=z_{n}z_{j+2-n},&& n+r-1\le j\le n'
\end{array}\right.$$
then
$$\sum_{j=1}^{n'}\phi_j(z)\overline{\psi_j(z)}=\|z\|_{r,s}^2(|z_1|^2+|z_{n}|)^2.$$
\end{example}

 Using the relationship between Huang's type lemma and rigidities of orthogonal pairs (respectively, orthogonal maps ), we prove the following result which explores the rigidities 
of holomorphic proper pairs (respectively,orthogonal map ) between generalized balls with Levi-degenerated boundaries can be infered from those between generalized balls with non-degenerated boundaries.

 To state the following theorem, we first introduce some notations.
In what follows, for $r,s,t\in\mathbb N$ and
$z=(z_1,\ldots,z_{r+s+t})\in\mathbb C^{r+s+t}$, we write $H_{r,s,t}$
for the diagonal matrix representing the (possibly degenerate and
indefinite) standard Hermitian form on $\mathbb C^{r+s+t}$ and write
the associated inner product and norm squared as follows
$$
\langle z,w\rangle_{r,s,t}:=z_1\bar w_1+\cdots+z_r\bar w_r
-z_{r+1}\bar w_{r+1}-\cdots-z_{r+s}\bar w_{r+s}
$$
$$
\|z\|_{r,s,t}^2:=|z_1|^2+\cdots+|z_r|^2-|z_{r+1}|^2-\cdots-|z_{r+s}|^2.
$$
We denote by $\mathbb C^{r,s,t}$ the $\mathbb C^{r+s+t}$ with the Hermitian form
defined above and by $\mathbb P^{r,s,t}:=\mathbb P\mathbb C^{r,s,t}$ its projectivization.
If $t=0$, we will simply write $\langle \cdot,\cdot\rangle_{r,s}$ and $\|\cdot\|_{r,s}$.
We write $\mathbb C^{r,s}$ and  $\mathbb P^{r,s}$  instead of $\mathbb C^{r,s,0}$ and $\mathbb P^{r,s,0}$.

 A \textit{local orthogonal pair} (see Definition~\ref{orth}) $\phi$, $\psi$ from $U\subset \mathbb P^{r,s,t}$ to $\mathbb P^{r',s',t'}$ is satisfying $\langle \phi(p),\psi(q)\rangle_{r',s',t'}=0$ for any $\langle p,q\rangle_{r,s,t}=0$ in $U$, which is a generalization of a Segre map between generalized balls. (For more details see \cite{Ga}). When $(f,f)$ is an local orthogonal pair, $f$ is also called a \textit{local orthogonal map} from $\mathbb P^{r,s,t}$ to $\mathbb P^{r',s',t'}$.
In what follows, we call $(\phi, \psi)$ \textit{null} if $\langle 
\phi(p),\psi(q)\rangle_{r',s',t'}=0$ for all $p$, $q\in U$. This is a kind of triviality in the current setting analogous to constant maps. We call $(\phi, \psi)$  \textit{quasi-standard} if it is in some sense a ``direct sum" of two parts, of which one comes from a linear conformal pair from $\mathbb C^{r,s,t}$ to $\mathbb C^{r',s',t'}$ and the other one is null. The exactly definition can be found in Definition \ref{stand}.

 The following result serves as a fundamental tool for analyzing the rigidity properties of orthogonal pairs, particularly in cases where the source domains have Levi-degenerated boundaries. Our approach to the study of holomorphic mapping between generalized balls is to work on these geometric objects directly. While our method might not be able to produce as much detail of the relevant maps as the traditional normal form theory, it has the advantages of being coordinate free and geometrically more transparent. It is especially suited for obtaining rigidities and general behaviors with shorter and easier arguments.

\begin{theorem}\label{degenerate pair}

Fix $r,s,r',s'\in\mathbb Z_{\ge 0}$ such that $r+s\ge 2$. If any local orthogonal pair from $\mathbb P^{r,s}$ to $\mathbb P^{r',s'}$ is quasi-standard or null, then a local orthogonal pair $(\phi,\psi)$ from $\mathbb P^{r,s,t}$ to $\mathbb P^{r',s',t'}$ must be quasi-standard or null for any positive integer $t$ and $t'$.

\end{theorem}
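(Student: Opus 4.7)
The plan is to reduce a local orthogonal pair $(\phi,\psi)$ from $\mathbb P^{r,s,t}$ to $\mathbb P^{r',s',t'}$ to a holomorphic family of pairs between the non-degenerate projective spaces $\mathbb P^{r,s}$ and $\mathbb P^{r',s'}$, invoke the hypothesized rigidity on each slice, and reassemble the resulting structure.

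First I would fix holomorphic lifts $\Phi,\Psi\colon\mathbb C^{r+s+t}\to\mathbb C^{r'+s'+t'}$ of $\phi,\psi$, and split coordinates as $z=(x,y)$ with $x\in\mathbb C^{r+s}$ and $y\in\mathbb C^{t}$, writing correspondingly $\Phi=(\Phi',\Phi'')$ and $\Psi=(\Psi',\Psi'')$ where $\Phi',\Psi'$ take values in $\mathbb C^{r'+s'}$ and $\Phi'',\Psi''$ in $\mathbb C^{t'}$. Because neither Hermitian form detects its own null block, the orthogonal pair condition collapses to
$$
\langle \Phi'(x_1,y_1),\Psi'(x_2,y_2)\rangle_{r',s'}=0\quad\text{whenever}\quad\langle x_1,x_2\rangle_{r,s}=0,
$$
with $y_1,y_2\in\mathbb C^{t}$ completely free. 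In particular, for each fixed pair $(y_1,y_2)$ the slice $(\Phi'(\cdot,y_1),\Psi'(\cdot,y_2))$ descends to a local orthogonal pair from $\mathbb P^{r,s}$ to $\mathbb P^{r',s'}$, to which the hypothesis applies.

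The second step is to promote this slicewise dichotomy to a global one. If the slice is null on an open set of parameters, then $\Phi'$ and $\Psi'$ map into mutually isotropic directions everywhere and $(\phi,\psi)$ is null by definition. Otherwise a generic slice is quasi-standard, so $\Phi'(\cdot,y_1)$ and $\Psi'(\cdot,y_2)$ decompose, up to projective scaling, as a linear conformal map $\mathbb C^{r,s}\to\mathbb C^{r',s'}$ plus a null remainder. I would then show that the linear conformal piece is uniquely determined modulo the null remainder, so that its matrix coefficients depend holomorphically on $(y_1,y_2)$; combined with the observation that $\Phi''$ and $\Psi''$ do not enter the orthogonality at all and can therefore be freely absorbed into the null summand of the target, this yields a holomorphic direct-sum decomposition of $(\Phi,\Psi)$ into a linear conformal pair $\mathbb C^{r,s,t}\to\mathbb C^{r',s',t'}$ plus a null pair, which is exactly the quasi-standardness required.

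The main obstacle I anticipate is the canonicity and holomorphic parameter-dependence of the linear conformal piece, i.e.\ rigidifying the slicewise ambiguity inherent in the quasi-standard decomposition. This will require a careful uniqueness analysis of the decomposition (likely extracted from the explicit normal form behind Definition~\ref{stand} or from the matrix factorization in Theorem~\ref{pair}) together with a connectedness argument on the parameter space $\mathbb C^{t}\times\mathbb C^{t}$ to prevent switching between the null and quasi-standard regimes across different loci. Once this rigidification is established, the promotion of the slicewise structure to a genuine direct-sum decomposition of $(\phi,\psi)$ on $\mathbb P^{r,s,t}$ follows by standard holomorphic bookkeeping.
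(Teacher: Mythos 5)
Your slicing set-up is sound: since the Hermitian forms vanish on the degenerate blocks, the orthogonality condition indeed only constrains $\Phi'$, $\Psi'$ through the non-degenerate parts of source and target, and freezing the degenerate source variables produces a family of local orthogonal pairs from $\mathbb P^{r,s}$ to $\mathbb P^{r',s'}$ to which the hypothesis applies. (The paper performs an equivalent reduction, but slices by general $(r+s-1)$-dimensional projective subspaces, which are $(r,s)$-subspaces of $\mathbb P^{r,s,t}$, rather than by affine slices $y=\mathrm{const}$; and it first kills $t'$ by projecting $\mathbb P^{r',s',t'}\to\mathbb P^{r',s'}$, which is your observation that $\Phi''$, $\Psi''$ are invisible.) The genuine gap is your second step, which you yourself flag as the main obstacle but do not resolve: the quasi-standard decomposition of a single slice is far from unique --- the null summand $W$ can be enlarged, the linear conformal piece is determined only up to scalar factors and up to directions annihilated by the target form --- so there is no canonical ``linear piece'' whose matrix coefficients you can declare to vary holomorphically in $(y_1,y_2)$. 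Worse, even granting such holomorphic dependence, what you would obtain is a map linear in $x$ for each fixed $y$ with $y$-dependent coefficients, whereas Definition~\ref{stand} demands a pair induced by maps linear on all of $\mathbb C^{r,s,t}$; passing from slicewise linearity to joint linearity is essentially the content of the theorem and is not ``standard holomorphic bookkeeping''.

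The paper avoids this entire reassembly problem by never attempting to glue the linear pieces. Proposition~\ref{map rank} characterizes both alternatives purely in terms of the multiplier $H(z,\bar z)$ of the pair: the pair is null iff $H\equiv 0$ and quasi-standard iff $H=h_1(z)\overline{h_2(z)}$. The slicewise hypothesis then only needs to say that $H$ restricted to every general $(r,s)$-subspace $V$ is either $0$ or of the form $f_V(z)\overline{g_V(z)}$, and Lemma~\ref{restriction} (the observation that $\partial\bar\partial\log H$ vanishes on every such slice, hence everywhere, so $\log H$ splits into holomorphic plus anti-holomorphic parts) upgrades this to $H=f(z)\overline{g(z)}$ globally; Proposition~\ref{map rank} is then applied once more in the reverse direction. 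If you want to salvage your approach, the cleanest fix is to replace your second step with exactly this scalar invariant: extract from each slice not its linear piece but its multiplier, and prove the gluing statement for functions of the form $f\overline{g}$ instead of for the decompositions themselves.
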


As a corollary, we get the result of orthogonal maps version, which was also in dependently proved  in a very recent paper
\cite{HX2} with a very different  method.

\begin{corollary}\label{map deg}

Fix $r,s,r',s'\in\mathbb Z_{\ge 0}$ such that $r+s\ge 2$. If any local orthogonal map from $\mathbb P^{r,s}$ to $\mathbb P^{r',s'}$ is quasi-standard or null, then a local orthogonal map $f$ from $\mathbb P^{r,s,t}$ to $\mathbb P^{r',s',t'}$ must be quasi-standard or null for any positive integer $t$ and $t'$.

\end{corollary}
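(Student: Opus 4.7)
The plan is to adapt the proof strategy of Theorem \ref{degenerate pair} to the special ``diagonal'' case where the two components of the pair coincide. Let $f : \mathbb P^{r,s,t} \to \mathbb P^{r',s',t'}$ be a local orthogonal map, so that $(f,f)$ is a local orthogonal pair. Observe that, by the definitions, $f$ is null (respectively quasi-standard) as a map if and only if the pair $(f,f)$ is null (respectively quasi-standard). Thus it is enough to show that $(f,f)$ is null or quasi-standard.

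One cannot invoke Theorem \ref{degenerate pair} as a black box, since its hypothesis concerns all orthogonal \emph{pairs} on $\mathbb P^{r,s}$, whereas Corollary \ref{map deg} only assumes rigidity for orthogonal \emph{maps} on $\mathbb P^{r,s}$. The remedy is to rerun the proof of Theorem \ref{degenerate pair} with $\phi = \psi = f$ plugged in throughout. The reduction procedure in that proof---restricting the pair on $\mathbb P^{r,s,t}$ to generic projective subspaces on which the Hermitian form becomes non-degenerate, and passing to a pair on $\mathbb P^{r,s}$---treats the two components of the pair symmetrically. Hence, when the input is of the diagonal form $(f,f)$, the reduced pair on $\mathbb P^{r,s}$ is again of the diagonal form $(g,g)$, corresponding to an honest local orthogonal map $g : \mathbb P^{r,s} \to \mathbb P^{r',s'}$. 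At this stage the hypothesis of the corollary is exactly what is needed: $g$ must be null or quasi-standard.

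Finally, I would retrace the synthesis step in the proof of Theorem \ref{degenerate pair}, which lifts the conclusion from the reduced pair on $\mathbb P^{r,s}$ back to the original pair on $\mathbb P^{r,s,t}$. Because the original pair $(f,f)$ is diagonal and the reduction procedure is diagonal-preserving, this lifting stays in the diagonal category, so $(f,f)$ is null or quasi-standard, and therefore $f$ itself is null or quasi-standard.

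The main obstacle is verifying in detail that every auxiliary choice in the proof of Theorem \ref{degenerate pair} can be arranged symmetrically in the two slots of the pair: namely, the choice of generic non-degenerate subspace for restriction, the normalization of the linear representatives of $\phi$ and $\psi$, and the component-wise application of Huang-type identities (such as Theorem \ref{pair}). Once these steps are checked to be insensitive to interchanging the two components---and in particular to the specialization $\phi = \psi$---no new ingredient beyond the proof of Theorem \ref{degenerate pair} is required.
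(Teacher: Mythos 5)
Your proposal is correct, and it is essentially the route the paper itself acknowledges in passing (``Corollary~\ref{map deg} can be deduced from Theorem~\ref{degenerate pair}'') but then does \emph{not} write out: the paper instead gives a different, self-contained argument. Both proofs share the same skeleton --- reduce to $t'=0$ by projecting away the degenerate directions in the target, restrict to generic $(r,s)$-subspaces $V$ of $\mathbb P^{r,s,t}$, apply the rigidity hypothesis to the restrictions, and then globalize a rank condition on the multiplier $H(z,\bar z)$ --- and you are right that the reduction is diagonal-preserving, so the hypothesis on \emph{maps} from $\mathbb P^{r,s}$ (rather than on all pairs) suffices. The difference lies in the globalization step. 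You, following Theorem~\ref{degenerate pair}, would use Lemma~\ref{restriction} ($\partial\bar\partial\log H|_\Xi\equiv 0$ on generic hyperplanes forces $\partial\bar\partial\log H\equiv 0$), which yields $H=f(z)\overline{g(z)}$; to close the loop via Corollary~\ref{multiplier map} you then still need the small extra observation that $H$ is real-valued (being the multiplier of a \emph{map}) and nonnegative on a dense family of subspaces, so that $f=cg$ with $c>0$ and $H=|h|^2$ --- a point your sketch does not address. The paper's written proof avoids this entirely: it takes the D'Angelo decomposition $H=\sum_i|a_i|^2-\sum_j|b_j|^2$ into linearly independent components, packages it as a holomorphic map $\mathcal A=[a_1,\ldots,a_p,b_1,\ldots,b_q]$ into $\mathbb P^{p+q-1}$, and observes that quasi-standardness of each restriction $\phi^V$ means $\mathcal A|_V$ is constant on generic subspaces, hence $\mathcal A$ is constant and $H$ has Hermitian rank $1$. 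The paper's version is better adapted to the real-valued setting and sidesteps the logarithm and the sign issue; yours buys economy by reusing the pair machinery wholesale, at the cost of the diagonal-preservation checks you list (one of which is moot: the proof of Theorem~\ref{degenerate pair} does not actually invoke Theorem~\ref{pair}, only Propositions~\ref{multiplier prop} and~\ref{map rank} and Lemma~\ref{restriction}).
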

We present a new rigidity property concerning orthogonal pairs (see Definition~\ref{orth}), which are generalizations of Segre maps. The investigation of rigidities related to Segre maps can be found in \cite{Zh}. To the best of the author's knowledge, this is the first result addressing the rigidity of orthogonal pairs and hence Segre maps different dimensions in the Levi-degenerate case.

\begin{theorem}
    Let $r,s,t,n'$ be non-negative integers satisfying $ n'\le 2(r+s)-3$.
    Then a local orthogonal pair $F=(\phi,\psi)$ from $\mathbb P^{r,s,t}$ to $\mathbb P^{n',0}$ is either quasi-standard or null.

\end{theorem}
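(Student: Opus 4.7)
The plan is to reduce, via Theorem~\ref{degenerate pair}, to the Levi-nondegenerate source case $t=0$, and in that base case invoke Theorem~\ref{pair}. After choosing a holomorphic lift of $(\phi,\psi)$ to homogeneous coordinates, the orthogonal-pair relation polarized along the diagonal yields
\begin{equation*}
\sum_{j=1}^{n'}\phi_j(z)\overline{\psi_j(z)} = \|z\|_{r,s,t}^2\, H(z,\bar z)
\end{equation*}
for a complex-valued real-analytic function $H$; note that $\|z\|_{r,s,t}^2$ involves only $z_1,\dots,z_{r+s}$.

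I would first establish the base case $t=0$ with $z\in\mathbb C^{r+s}$. If $H\equiv 0$ then $\sum\phi_j(z)\overline{\psi_j(w)}\equiv 0$ by full polarization, which is exactly nullity of $(\phi,\psi)$. Otherwise $H\not\equiv 0$, and in the range $r+s\le n'\le 2(r+s)-3$ Theorem~\ref{pair} delivers holomorphic factors $h_1,h_2$, auxiliary holomorphic functions $\Phi_i,\Psi_i$ ($i=1,\ldots,q$, $q=n'-r-s$), and matrices $B,C$ with $\overline{C}^T B=H_{r,s,q}$ realizing the factorization stated there. The block structure of this factorization is precisely that of a quasi-standard pair in the sense of Definition~\ref{stand}: the $h_k z_i$ entries furnish a linear conformal pair into a non-degenerate sub-Hermitian block of signature $(r,s)$ inside $\mathbb C^{n'}$, while the $\Phi_i,\Psi_i$ entries form a null pair into a totally isotropic complement. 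For the remaining sub-range $0\le n'<r+s$, the indefinite-signature Huang lemma in the under-determined regime forces $H\equiv 0$, returning us to the null case.

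With the base case settled, Theorem~\ref{degenerate pair} applied with $(r',s')=(n',0)$ extends the dichotomy to local orthogonal pairs from $\mathbb P^{r,s,t}$ to $\mathbb P^{n',0,t'}$ for all $t,t'\ge 1$. To recover our actual target $\mathbb P^{n',0}$ (the case $t'=0$), I would use the isometric inclusion $\iota:\mathbb P^{n',0}\hookrightarrow\mathbb P^{n',0,1}$, $[w_1:\cdots:w_{n'}]\mapsto[w_1:\cdots:w_{n'}:0]$, apply the previous step to $(\iota\circ\phi,\iota\circ\psi)$, and descend: since the image lies in the coordinate hyperplane $w_{n'+1}=0$, the summands in the resulting quasi-standard decomposition restrict consistently to $\mathbb P^{n',0}$.

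The principal obstacle is the sub-range $n'<r+s$ in the base case, where Theorem~\ref{pair} does not apply and one must invoke (or reprove) an indefinite-signature Huang lemma in the under-determined regime. A secondary but non-trivial technical point is translating the matrix identity $\overline{C}^T B=H_{r,s,q}$ from Theorem~\ref{pair} into the structural language of Definition~\ref{stand}, namely producing the explicit orthogonal decomposition of the target into a non-degenerate block of signature $(r,s)$ and a totally isotropic complement. The $t'=0$ descent in the final step is routine once one tracks how the isotropic and non-degenerate summands behave under Hermitian restriction to the coordinate hyperplane.
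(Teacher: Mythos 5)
Your overall skeleton --- reduce the degenerate-source case to $t=0$ via Theorem~\ref{degenerate pair}, then settle the base case $t=0$ separately --- is exactly the paper's route: the paper obtains the theorem ``directly from the combination of Theorem~\ref{Segre} and Theorem~\ref{degenerate pair}.'' The difference, and the problem, is in how you handle the base case. You propose to derive the $t=0$ case from Theorem~\ref{pair}, but in this paper Theorem~\ref{pair} is itself proved by homogenizing the Huang-type equation into a local orthogonal pair from $\mathbb P^{r,s,1}$ to $\mathbb P^{n',0}$ and then invoking precisely the rigidity statement you are trying to prove (the $t=1$ case, obtained from the $t=0$ case via Theorem~\ref{degenerate pair}). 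So your chain of implications runs $t=0 \Leftarrow$ Theorem~\ref{pair} $\Leftarrow$ $t=1$ $\Leftarrow$ $t=0$, which is circular unless you supply an independent proof of Theorem~\ref{pair}. The correct input for the base case is Theorem~\ref{Segre} (Theorem~1.3 of \cite{Ga}), which states directly that a local orthogonal pair from $\mathbb P^{r,s}$ to $\mathbb P^{n',0}$ with $n'\le 2(r+s)-3$ is quasi-standard or null; no passage through a Huang-type equation is needed, and no translation of the matrix identity $\overline{C}^TB=H_{r,s,q}$ into Definition~\ref{stand} is required (that translation is the content of Proposition~\ref{pair rank}, which the paper uses in the \emph{opposite} direction, to deduce Theorem~\ref{pair} from this theorem).

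A second genuine gap is the sub-range $0\le n'<r+s$, which you explicitly leave open: Theorem~\ref{pair} assumes $r+s\le n'$, so your base-case argument does not cover it, and your appeal to an ``indefinite-signature Huang lemma in the under-determined regime'' is an unproved assertion. Theorem~\ref{Segre} covers the whole range $n'\le 2(r+s)-3$ at once, so this issue disappears with the correct citation. Your worry about $t'=0$ in Theorem~\ref{degenerate pair} is, by contrast, inessential: the proof of that theorem begins by projecting the target $\mathbb P^{r',s',t'}$ onto $\mathbb P^{r',s'}$ and reducing to $t'=0$, so the case $t'=0$ is already contained in it and the detour through $\iota:\mathbb P^{n',0}\hookrightarrow\mathbb P^{n',0,1}$ is unnecessary.
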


The paper is organized as follows:
Section 2 serves as an introduction to the main tools employed in this paper, namely, orthogonal pairs and orthogonal maps. It elaborates on the fundamental concepts and facts related to these tools. By presenting the definitions and properties of orthogonal pairs and orthogonal maps in this section, we establish a foundation for the subsequent discussions.
Section 3 is dedicated to our primary objective, which is to demonstrate how the rigidity of orthogonal pairs, especially between generalized balls with Levi-degenerate boundaries, can be deduced from their counterparts involving generalized balls with Levi-non-degenerate boundaries. In section 4, we present some new results about Huang's lemma, which are developed based on the rigidity properties of orthogonal pairs elucidated in Section 3. This results extend the existing Huang's lemma.


\section{Orthogonal pair and orthogonal map}

We shall use the setup, notation and conventions in \cite{GN} and \cite{Ga}. For the reader's convenience, we recall them here.
Let $n\geq 1$ and $r,s,t\in\{0,\ldots,n\}$ such that $r+s+t= n$. Consider the (possibly indefinite and degenerate) Hermitian form on $\mathbb C^{n}$ defined by
$$
\langle z,w\rangle_{r,s,t}:=z_1\bar w_1+\cdots+z_r\bar w_r
-z_{r+1}\bar w_{r+1}-\cdots-z_{r+s}\bar w_{r+s}
$$
and write the norm squared as $\|z\|^2_{r,s,t}:=\langle z,z\rangle_{r,s,t}$.
We denote by $\mathbb C^{r,s,t}$ the $\mathbb C^{n}$ with the Hermitian form
defined above and by $\mathbb P^{r,s,t}:=\mathbb P\mathbb C^{r,s,t}$ its projectivization.
If $t=0$, we will simply write $\langle \cdot,\cdot\rangle_{r,s}$ and $\|\cdot\|_{r,s}$.
We write $\mathbb C^{r,s}$ and  $\mathbb P^{r,s}$  instead of $\mathbb C^{r,s,0}$ and $\mathbb P^{r,s,0}$.

The \textit{orthogonal complement} of $z$ is defined as
$$z^{\perp}=\{w\in  \mathbb P^{r,s,t} \mid \langle z, w\rangle_{r,s,t}=0\}.$$
Note that the orthogonality with respect to $\langle\cdot,\cdot\rangle_{r,s,t}$ still makes sense on $\mathbb P^{r,s,t}$.

Similarly, although the norm $\|\cdot\|^2_{r,s,t}$ does not descend to $\mathbb P^{r,s,t}$, the positivity, negativity or nullity of a line (1-dimensional subspace)
in $\mathbb C^{r,s,t}$ is well defined
and thus we can talk about positive points, negative points and null points on  $\mathbb P^{r,s,t}$.

Now let $H$ be a complex linear subspace in $\mathbb C^{r,s,t}$ and the restriction of  $\langle\cdot,\cdot\rangle_{r,s,t}$ on $H$ has the signature $(a; b; c)$. Obviously, we have $0\le a\le r$, $0\le b\le s$, $0\le c\le \min\{r-a,s-b\}+t$ and $a+b+c=\dim(H)$.  Then $\mathbb PH \cong \mathbb P^{a,b,c}$. We call $\mathbb P H$ an\textit{ $(a,b,c)$-subspace} of $\mathbb P^{r,s,t}$. If $c=0$, we will simply write $(a,b)$-subspace

Maps preserving the orthogonality have
been studied in \cite{GN} and \cite{Ga}.
We first recall some definitions:

\begin{definition}\label{orth}
	 Let
$\phi$, $\psi$ be two holomorphic maps from an open set $U\subset \mathbb P^{r,s,t}$ into $\mathbb
P^{r',s',t'}$. We call $F:=(\phi,\psi)$   an  \textbf {orthogonal pair} if
$\phi(z)\perp \psi(w)$ for any $z$,$w\in U$
such that
$z \perp w$. We also call $F$ a local orthogonal pair from  $\mathbb P^{r,s,t}$ to $\mathbb P^{r',s',t'}$. If $(\phi,\phi)$ is a local orthogonal pair from  $\mathbb P^{r,s,t}$ to
$\mathbb P^{r',s',t'}$, then $\phi$ is called a local \textbf{orthogonal map} from  $\mathbb P^{r,s,t}$ to $\mathbb P^{r',s',t'}$.
\end{definition}

In the study of orthogonal pairs/maps, one of the main goals is to determine under what conditions an orthogonal pair/map comes from some \textit{standard maps}, in the following sense:

\begin{definition}[\cite{Ga}]\label{stand}
Let $(\phi,\psi)$ be a local orthogonal pair from $\mathbb
P^{r,s,t}$ to
$\mathbb P^{r',s',t'}$.
We call $(\phi,\psi)$ is
a \textbf{standard pair} if it is induced by a linear conformal pair from $\mathbb C^{r,s,t}$ to $\mathbb C^{r',s',t'}$,
i.e. a pair of linear maps $\Phi,\Psi:\mathbb C^{r,s,t}\rightarrow\mathbb C^{r',s',t'}$ such that for some $\lambda\in\mathbb R^*$, $\langle\Phi(z),\Psi(w)\rangle_{r',s',t'}=\lambda\langle z,w\rangle_{r,s,t}$ for every $z,w\in \mathbb C^{r,s,t}$

We call
$(\phi,\psi)$ a \textbf{null pair} if
$\phi(z)\perp \psi(w)$ for any $z,w$ in the definition domain.

We call $(\phi,\psi)$ \textbf{quasi-standard} if
there exists an orthogonal decomposition $\mathbb C^{r',s',t'}=V\oplus W$ such that one of the following scenarios occur:

(1) The image of $\phi$ or $\psi$ is contained in $\mathbb PV$ and
$(\pi^V\circ\phi, \pi^V\circ \psi)$
is standard;

(2) The pair $(\pi^V\circ\phi, \pi^V\circ \psi)$ is standard and
$(\pi^W\circ\phi, \pi^W\circ \psi)$
is  null, where $\pi^V$ and $\pi^W$ are the projections from $\mathbb P^{r',s',t'}$ to $\mathbb PV$ and $\mathbb PW$
induced by the orthogonal projections $\mathbb C^{r',s'.t'}\rightarrow V$ and $\mathbb C^{r',s'.t'}\rightarrow W$.

Similarly, we can define standard orthogonal maps, null orthogonal maps, etc.
\end{definition}

Now, we explain the relationship between Huang's equations and orthogonal pairs.
Let
$\{\phi_j\}_{j=1}^{r'+s'}$ and $\{\psi_j\}_{j=1}^{r'+s'}$
be holomorphic functions in a neighborhood of $z=0\in\mathbb C^{r,s,t}$.
Suppose there exists a complex-valued
real-analytic function $H(z,\bar z)$ such that the following equation is satisfied for $z\approx 0$,
\begin{equation}\label{S1}
\sum_{j=1}^{r'}\phi_j(z)\overline{\psi_j(z)}-\sum_{j=r'+1}^{r'+s'}\phi_j(z)\overline{\psi_j(z)}=\|z\|_{r,s,t}^2H(z,\bar z),
\end{equation}
then $\{\phi_j\}$ and $\{\psi_j\}$ can induce a local orthogonal pair after homogenization, as follows.

Let $n:=r+s+t$.
Homogenizing $\{\phi_j(z)\}$ and $\{\psi_j(z)\}$ by using the substitution $z_k=x_k/x_{n+1}$, for $1\leq k\leq n$ and multiplying the equation by $|x_{n+1}|^{2}$, then we get an equation of bihomogeneous functions
of degree 1 in $x=[x_1,\ldots,x_{n+1}]$:

\begin{equation}
\sum_{j=1}^{r'}\tilde\phi_j\left(x\right)\overline{\tilde\psi_j\left(x\right)}-\sum_{j=r'+1}^{r'+s'}\tilde\phi_j\left(x\right)\overline{\tilde\psi_j\left(x\right)}
=\|x\|_{r,s,t+1}^2\tilde H\left(x,\bar x\right),
\end{equation}
where $\tilde{\phi}_j(x)=x_{n+1}\phi_j\left(\dfrac{x_1}{x_{n+1}},\ldots,\dfrac{x_n}{x_{n+1}}\right)$, $\tilde{\psi}_j(x)=x_{n+1}\psi_j\left(\dfrac{x_1}{x_{n+1}},\ldots,\dfrac{x_n}{x_{n+1}}\right)$ and $\tilde{H}(x,\bar x)=H\left(\dfrac{x_1}{x_{n+1}},\ldots,\dfrac{x_n}{x_{n+1}},\overline{\left(\dfrac{x_1}{x_{n+1}}\right)},\ldots,\overline{\left(\dfrac{x_n}{x_{n+1}}\right)}\right)$.

Polarizing the equation above,  we get
\begin{equation}\label{polarized eq}
\sum_{i=1}^{r'}\tilde{\phi}_i(x)\overline{\tilde{\psi}_i(y)}-\sum_{j=r'+1}^{r'+s'}\tilde{\phi}_j(x)\overline{\tilde{\psi}_j(y)}=\langle x,y\rangle_{r,s,t+1}\tilde{H}(x,\bar y).
\end{equation}
Let $ \widetilde{\phi}=[\tilde{\phi}_1\cdots, \tilde{\phi}_{r\rq{}+s\rq{}}]$ and $ \widetilde{\psi}=[\tilde{\psi}_1\cdots, \tilde{\psi}_{r\rq{}+s\rq{}}]$. Then we deduce that $\langle \widetilde{\phi}(x),\widetilde{\psi}(y)\rangle_{r',s'}=0$ for every $x,y\in \mathbb P^{r,s,t+1}$ such that $\langle x,y\rangle_{r,s,t+1}=0$. Therefore, $F=(\widetilde{\phi}, \widetilde{\psi})$ is a local orthogonal pair from $\mathbb P^{r,s,t+1}$ to $\mathbb P^{r',s'}$.

Therefore, we can now study the holomorphic functions $\{\phi_i\}$ and $\{\psi_i\}$ satisfying equation (\ref{S1}) through studying the orthogonal pairs from $\mathbb P^{r,s,t+1}$ to $\mathbb P^{r',s'}$ induced by $\{\phi_i\}$ and $\{\psi_i\}$. Hence, we can use techniques and results for orthogonal pairs.


\section{Rigidity of orthogonal maps and pairs}

In this section, we will generalize some rigidity theorems obtained in~\cite{GN} and~\cite{Ga} for orthogonal maps and pairs. These generalizations are made to incorporate  degeneracy in the Hermitian forms of the domain space. As seen in the previous section, such degeneracy necessarily appears when we homogenize the functions satisfying a Huang's lemma-type equation (e.g. in Theorem~\ref{pair}). The degeneracy of the Hermitian form in the source space will give us new difficulty in obtaining rigidity theorems, mainly due to the fact that a general hyperplane is no longer an orthogonal complement (of any point). We will use other method to handle these cases.

We have seen that a Huang's lemma-type equation will give rise to a local orthogonal pair. The following proposition is a converse in some sense.

\begin{proposition}\label{multiplier prop}
Let $(\phi,\psi):U\subset\mathbb P^{r,s,t}\rightarrow\mathbb P^{r',s',t'}$ be a local orthogonal pair, where $U$ is a connected non-empty open set. There exist an open set $V\subset U$ and holomorphic functions $\{\phi_j\}_{j=1}^{r'+s'+t'}$ and  $\{\psi_j\}_{j=1}^{r'+s'+t'}$ of $z=(z_1,\ldots, z_{r+s+t})\in\widetilde V$, where $\widetilde V$ is the preimage of $V$ with respect to the projection $\mathbb C^{r,s,t}\setminus\{0\}\rightarrow\mathbb P^{r,s,t}$, such that
$$
\phi([z])=[\phi_1(z),\ldots,\phi_{r'+s'+t'}(z)], \,\,\psi([z])=[\psi_1(z),\ldots,\psi_{r'+s'+t'}(z)]
$$
for $z\in\widetilde V$ and each $\phi_j$,$\psi_j$, as a function of the variables $(z_1,\ldots,z_{r+s+t})$, is homogeneous of degree $1$. Furthermore, there exists a complex-valued real analytic function $H(z,\bar z)$ on $\widetilde V$ such that for $z\in\widetilde V$,
\begin{equation}\label{H}
\sum_{j=1}^{r'}\phi_j(z)\overline{\psi_j(z)}-\sum_{j=r'+1}^{r'+s'}\phi_j(z)\overline{\psi_j(z)}=\|z\|_{r,s,t}^2H(z,\bar z).
\end{equation}

\end{proposition}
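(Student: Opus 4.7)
The plan is to construct degree-$1$ homogeneous holomorphic lifts of $\phi,\psi$ on the preimage of a small open set $V\subset U$, polarize the orthogonal-pair identity into a holomorphic function vanishing on the zero locus of $\langle z,w\rangle_{r,s,t}$, and then invoke the holomorphic division theorem.

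For the lifting step, I would fix a basepoint $[z_0]\in U$ and a linear form $\ell(z)$ on $\mathbb{C}^{r+s+t}$ with $\ell(z_0)\ne 0$, then shrink $U$ to a connected open set $V\subset U\cap\{[z]:\ell(z)\ne 0\}$ on which both $\phi|_V$ and $\psi|_V$ admit holomorphic sections $\hat\phi,\hat\psi:V\to\mathbb{C}^{r'+s'+t'}\setminus\{0\}$ (obtained by normalizing a nonvanishing coordinate of each map). The form $\ell$ is then nonzero throughout $\widetilde V:=\pi^{-1}(V)$, so one may define
\begin{equation*}
\phi_j(z):=\ell(z)\,\hat\phi_j(\pi(z)),\qquad \psi_j(z):=\ell(z)\,\hat\psi_j(\pi(z)),\qquad 1\le j\le r'+s'+t'.
\end{equation*}
Since $\hat\phi_j\circ\pi,\hat\psi_j\circ\pi$ are degree-$0$ homogeneous holomorphic on $\widetilde V$ and $\ell$ is linear, each $\phi_j,\psi_j$ is holomorphic on $\widetilde V$ and homogeneous of degree $1$, and by construction $\phi([z])=[\phi_1(z),\ldots,\phi_{r'+s'+t'}(z)]$ and similarly for $\psi$.

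The orthogonal-pair hypothesis now says that for every $(z,w)\in\widetilde V\times\widetilde V$ with $\langle z,w\rangle_{r,s,t}=0$,
\begin{equation*}
G(z,\bar w):=\sum_{j=1}^{r'}\phi_j(z)\overline{\psi_j(w)}-\sum_{j=r'+1}^{r'+s'}\phi_j(z)\overline{\psi_j(w)}=0,
\end{equation*}
the last $t'$ components contributing nothing to $\langle\cdot,\cdot\rangle_{r',s',t'}$. Substituting $u:=\bar w$ recasts $G$ as a genuinely holomorphic function of $(z,u)$ on an open subset of $\mathbb{C}^{2(r+s+t)}$ that vanishes on the zero locus of the polynomial $P(z,u):=\sum_{i=1}^r z_iu_i-\sum_{j=r+1}^{r+s}z_ju_j$. (In the degenerate case $r=s=0$ the orthogonality is vacuous, so $G\equiv 0$ and (\ref{H}) holds trivially with $H\equiv 0$.) The holomorphic division theorem produces a holomorphic $\widetilde H(z,u)$ with $G(z,u)=P(z,u)\widetilde H(z,u)$, and setting $u=\bar z$ and $H(z,\bar z):=\widetilde H(z,\bar z)$ furnishes the desired real-analytic $H$ satisfying (\ref{H}).

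The main technical point is the divisibility step in the presence of the degenerate directions: since $P$ ignores $z_{r+s+1},\ldots,z_{r+s+t}$ and the corresponding $u$-coordinates, its critical set is the linear subspace $\{z_1=\cdots=z_{r+s}=u_1=\cdots=u_{r+s}=0\}$, of codimension $2(r+s)\ge 2$ in the ambient space. Transverse Weierstrass division at smooth points of $\{P=0\}$ yields a holomorphic quotient off this singular locus, and Hartogs' extension theorem then fills in the codimension-$\ge 2$ singular set, producing a globally holomorphic $\widetilde H$.
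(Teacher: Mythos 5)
Your proposal is correct, and its first half (shrinking to a chart, taking a section, and multiplying by a nonvanishing linear form $\ell$ to get degree-$1$ homogeneous lifts) is the same construction the paper uses with $\ell=z_1$. Where you genuinely diverge is in producing the multiplier $H$: the paper stays on the diagonal, writing everything in the inhomogeneous coordinates of the chart and dividing the real-analytic function $1+\sum_j\phi^\sharp_j\overline{\psi^\sharp_j}-\cdots$ by the real-analytic defining function $1+\sum|w_k|^2-\sum|w_k|^2$ of the null cone (a step it asserts in one line), whereas you polarize first, treat $u=\bar w$ as an independent variable, and divide the resulting holomorphic function $G(z,u)$ by the quadric $P(z,u)$, handling the singular locus of $P$ by Riemann extension across a set of codimension $2(r+s)\ge 2$. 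Your route buys robustness precisely where the paper is terse: the holomorphic division argument works uniformly whether $P$ is irreducible ($r+s\ge2$), reducible ($r+s=1$), or identically zero ($r+s=0$, which you dispatch separately), and it does not require the real null cone $\{\|z\|^2_{r,s,t}=0\}$ to actually meet $V$ --- a case (e.g.\ $s=t=0$) in which the paper's diagonal division has no hypersurface to divide along and one must instead observe that $\|z\|^2_{r,s,t}$ is simply nonvanishing there. The paper's diagonal argument, when it applies, is shorter and avoids the bookkeeping of the product domain $\widetilde V\times\overline{\widetilde V}$. I see no gap in your argument; the only cosmetic omission is that you should note the local quotients $G/P$ obtained at smooth points of $\{P=0\}$ agree with $G/P$ off the zero locus and hence glue before you extend across the singular set.
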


\begin{proof}
If $(\phi,\psi)$ is null, then for all $p,q\in U$,  $\langle\phi(p),\psi(q)\rangle_{r',s',t'}=0$. Therefore, $
\sum_{j=1}^{r'}\phi_j(z)\overline{\psi_j(z)}-\sum_{j=r'+1}^{r'+s'}\phi_j(z)\overline{\psi_j(z)}\equiv 0.$ The proposition has been proved.

When $(\phi,\psi)$ is not null, without loss of generality, and by shrinking the domain of definition of $(\phi,\psi)$ to a sufficiently small open subset $V\subset U$, we may assume that $V$ (resp. $\phi(V)$ and $\psi(V)$) is contained in the Euclidean coordinate chart in $W\subset\mathbb P^{r,s,t}$ defined by $z_1\neq 0$ (resp. $W'\subset\mathbb P^{r',s',t'}$ defined by $z'_1\neq 0$). Write the inhomogeneous coordinates in $W$ as $(w_2,\ldots, w_{r+s+t})$, where $w_k=z_k/z_1$ for $2\leq k\leq r+s+t$.

In terms of these inhomogeneous coordinates, the map $\phi$ and $\psi$ can be represented by $[1,w_2,\ldots,w_{r+s+t}]\mapsto [1,\phi^\sharp_2(w),\ldots,\phi^\sharp_{r'+s'+t'}(w)]$ and $[1,w_2,\ldots,w_{r+s+t}]\mapsto [1,\psi^\sharp_2(w),\ldots,\psi^\sharp_{r'+s'+t'}(w)]$ respectively, where each $\phi^\sharp_j$ and $\psi^\sharp_j$ are  holomorphic functions of $w=(w_2,\ldots,w_{r+s+t})$. Furthermore, the set of null points in $W$ is defined by the equation $1+\sum_{k=2}^r|w_k|^2-\sum_{k=r+1}^{r+s}|w_k|^2=0$ . As $(\phi,\psi)$ preserves the orthogonalities (which follows directly from the definition of orthogonal pairs), by shrinking $V$ if necessary, we deduce that there exists a complex-valued real analytic function $H^\sharp(w,\bar w)$ on $V$ such that
$$
1+\sum_{j=2}^{r'}\phi^\sharp_j(w)\overline{\psi^\sharp_j(w)}-\sum_{j=r'+1}^{r'+s'}\phi^\sharp_j(w)\overline{\psi^\sharp_j(w)}
=\left(1+\sum_{k=2}^r|w_k|^2-\sum_{k=r+1}^{r+s}|w_k|^2\right)H^\sharp(w,\bar w).
$$

Rewrite the equation in terms of the variables $(z_1,\ldots, z_{r+s+t})$ and multiply both sides by $|z_1|^2$, we get the desired equation
$$
\sum_{j=1}^{r'}\phi_j(z)\overline{\psi_j(z)}-\sum_{j=r'+1}^{r'+s'}\phi_j(z)\overline{\psi_j(z)}=\|z\|_{r,s,t}^2H(z,\bar z),
$$
where $\phi_1:=z_1$,$\psi_1:=z_1$ $\phi_j:=z_1\phi^\sharp_j(z_2/z_1,\ldots, z_{r+s+t}/z_1)$, $\psi_j:=z_1\phi^\sharp_j(z_2/z_1,\ldots, z_{r+s+t}/z_1)$  for $2\leq j\leq r'+s'+t'$; and $H(z,\bar z):=H^\sharp(z_2/z_1,\ldots, z_{r+s+t}/z_1)$.
\end{proof}

Proposition \ref{multiplier prop} showes the properties of $H(z,\bar z)$ is very important during the study of orthogonal pairs and Huang's lemma.

\begin{definition}\label{multiplier}
For a given a local orthogonal pair $(\phi,\psi)$ from $\mathbb P^{r,s,t}$ to $\mathbb P^{r',s',t'}$, the complex-valued real analytic function $H(z,\bar z)$ satisfying the following equation
\begin{equation*}
\sum_{j=1}^{r'}\phi_j(z)\overline{\psi_j(z)}-\sum_{j=r'+1}^{r'+s'}\phi_j(z)\overline{\psi_j(z)}=\|z\|_{r,s,t}^2H(z,\bar z).
\end{equation*}
is called the multiplier of $(\phi,\psi)$. The multiplier is uniquely determined up to a factor of the type $h_1(z)\overline{h_2(z)}$, where $h_1(z)$, $h_2(z)$ are holomorphic functions.  
\end{definition}

It is clear that for a given local orthogonal pair $(\phi,\phi)$, then $H(z,\bar z)$ is a real-valued analytic function, which is aslo called the \textit{multiplier} of orthogonal map $\phi$ and is determined up to some positive factor. The multiplier is essential for the properties of $\phi$, as we will see below. See~\cite{GN1} for a  study of the multipliers for rational proper maps between generalized balls.

  Proposition \ref{multiplier prop} shows the relationship between orthogonal pairs and Huang's type equations. Propostion \ref{map rank} gives the relationship between the  multiplier and rigidities of orthogonal pairs between projective spaces. Therefore, we can get some rigidities of orthogonal pairs from the information about the multiplier.

\begin{proposition}\label{map rank}
    Let $(\phi,\psi)$ be an orthogonal pair from $\mathbb P^{r,s,t}$ to $\mathbb P^{r',s',t'}$ and $H(z,\bar z)$ be the multiplier associated to $(\phi,\psi)$ as in Proposition~\ref{multiplier prop}. Then, $(\phi,\psi)$ is null if and only if $H(z,\bar z)\equiv 0$.   
   Additionally, $(\phi,\psi)$ is quasi-standard if and only if there exists two non-zero holomorphic functions $h_1(z)$ and $h_2(z)$ such that $H(z,\bar z)=h_1(z)\overline{h_2(z)}$.

\end{proposition}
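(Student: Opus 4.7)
The plan is to establish the two equivalences separately; the nullity statement is a quick polarization argument, while the quasi-standard characterization is the substantial content.

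For the first equivalence, if $(\phi,\psi)$ is null then $\langle\phi(p),\psi(q)\rangle_{r',s',t'}=0$ for all $p,q$, and specializing $q=p$ makes the left-hand side of the multiplier equation vanish identically, forcing $H\equiv 0$. Conversely, if $H\equiv 0$, the left-hand side of the multiplier equation is identically zero and is separately holomorphic in $z$ and antiholomorphic in $\bar z$. Polarization, i.e.\ replacing $\bar z$ with an independent antiholomorphic variable $\bar w$ and analytically continuing, yields $\langle\phi(z),\psi(w)\rangle_{r',s',t'}\equiv 0$ on a bidisc neighborhood, so $(\phi,\psi)$ is null.

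For the forward direction of the quasi-standard equivalence, assume $(\phi,\psi)$ is quasi-standard with orthogonal decomposition $\mathbb C^{r',s',t'}=V\oplus W$. Choose lifts adapted to this decomposition and write $\phi=(\phi^V,\phi^W)$, $\psi=(\psi^V,\psi^W)$. Since $V\perp W$, the Hermitian pairing splits as $\langle\phi,\psi\rangle_{r',s',t'}=\langle\phi^V,\psi^V\rangle+\langle\phi^W,\psi^W\rangle$, and the $W$-part contributes zero by nullity. The $V$-part, being standard, comes from a linear conformal pair $(\Phi,\Psi)$ satisfying $\langle\Phi(z),\Psi(w)\rangle_{r',s',t'}=\lambda\langle z,w\rangle_{r,s,t}$, with $\phi^V=h_1\Phi$ and $\psi^V=h_2\Psi$ for nonzero holomorphic $h_1,h_2$. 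Substituting into the multiplier equation yields $H=\lambda h_1\overline{h_2}$, of the desired factored form.

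The converse is the principal obstacle. Assume $H=h_1\overline{h_2}$ with $h_1,h_2$ nonzero holomorphic. Polarizing the multiplier equation and passing to rescaled lifts $\tilde\phi:=\phi/h_1$, $\tilde\psi:=\psi/h_2$ on the open set where $h_1 h_2\ne 0$ yields the clean bilinear identity $\langle\tilde\phi(z),\tilde\psi(w)\rangle_{r',s',t'}=\langle z,w\rangle_{r,s,t}$. Taylor-expanding $\tilde\phi=\sum_\alpha A_\alpha z^\alpha$ and $\tilde\psi=\sum_\beta B_\beta w^\beta$ and matching coefficients of $z^\alpha\bar w^\beta$ gives $\langle A_\alpha,B_\beta\rangle_{r',s',t'}=\eta_k$ when $(\alpha,\beta)=(e_k,e_k)$ for some $k$ and $0$ otherwise, where $\eta_k$ are the signs of the source form. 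The linear parts $\Phi(z):=\sum_k A_{e_k}z_k$ and $\Psi(z):=\sum_k B_{e_k}z_k$ then form a linear conformal pair, and the residuals $\phi^W:=\tilde\phi-\Phi$, $\psi^W:=\tilde\psi-\Psi$ satisfy $\langle\phi^W,\psi^W\rangle\equiv 0$ together with the cross-vanishings $\langle\Phi,\psi^W\rangle\equiv 0$ and $\langle\phi^W,\Psi\rangle\equiv 0$. The hard part is to exhibit an orthogonal vector-space decomposition $\mathbb C^{r',s',t'}=V\oplus W$ with $\mathrm{image}(\Phi)+\mathrm{image}(\Psi)\subset V$ and $\mathrm{image}(\phi^W)+\mathrm{image}(\psi^W)\subset W$; the cross-vanishings give $\mathrm{image}(\phi^W)\perp\mathrm{image}(\Psi)$ and $\mathrm{image}(\psi^W)\perp\mathrm{image}(\Phi)$ but not automatically $\mathrm{image}(\phi^W)\perp\mathrm{image}(\Phi)$, and the target Hermitian form may be degenerate, so the construction of $V$ and $W$ is delicate. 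The approach is to take $V$ to be a maximal non-degenerate subspace containing $\mathrm{image}(\Phi)+\mathrm{image}(\Psi)$ and $W$ its Hermitian orthogonal complement enlarged by the radical, and then verify using the cross-vanishings that the residuals lie in $W$; standardness of $(\pi^V\tilde\phi,\pi^V\tilde\psi)$ and nullity of $(\pi^W\tilde\phi,\pi^W\tilde\psi)$ follow.
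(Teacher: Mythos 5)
Your treatment of the null equivalence and of the ``only if'' direction of the quasi-standard equivalence is correct, and your reduction of the converse to the polarized identity $\langle\tilde\phi(z),\tilde\psi(w)\rangle_{r',s',t'}=\langle z,w\rangle_{r,s,t}$ (after dividing by $h_1(z)\overline{h_2(w)}$) is exactly the paper's starting point. The gap is in your final step, and it is genuine. You correctly note that the coefficient identities give $\mathrm{image}(\phi^W)\perp\mathrm{image}(\Psi)$ and $\mathrm{image}(\psi^W)\perp\mathrm{image}(\Phi)$ but \emph{not} $\mathrm{image}(\phi^W)\perp\mathrm{image}(\Phi)$; yet your proposed decomposition, with $V\supset\mathrm{image}(\Phi)+\mathrm{image}(\Psi)$ and the residuals ``verified'' to lie in $W\subset V^{\perp}$, needs precisely that missing relation, and it fails. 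Concretely, take the source $\mathbb P^{1,1}$, the target $\mathbb C^{2,2}$, $\tilde\psi(w)=(w_1,\mu w_1,w_2,0)$ with $\mu\neq 0$, and $\tilde\phi(z)=(z_1-\bar\mu f(z),\,f(z),\,z_2,\,0)$ for any $f$ of degree $\ge 2$. Then $\langle\tilde\phi(z),\tilde\psi(w)\rangle_{2,2}=z_1\bar w_1-z_2\bar w_2$, the Taylor-linear part of $\tilde\phi$ is $\Phi(z)=(z_1,0,z_2,0)$, and the residual $\phi^W(z)=f(z)\,(-\bar\mu,1,0,0)$ satisfies all of your cross-vanishings but is \emph{not} orthogonal to $\mathrm{image}(\Phi)$. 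The pair is quasi-standard, but the witnessing decomposition is $W=\mathrm{span}\{(-\bar\mu,1,0,0)\}$, $V=W^{\perp}$, and $V$ does \emph{not} contain $\mathrm{image}(\Phi)$: the standard summand is $\pi^V\circ\Phi$ plus a correction, not $\Phi$ itself. So identifying the Taylor-linear part with the standard part, and building $V$ around it, is the wrong construction, and the deferred verification cannot be carried out.

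The paper sidesteps this entirely. It first reduces to $t'=0$, then replaces $\psi$ by $\Psi$ (negating its last $s'$ components) so that the pair maps into the \emph{positive-definite} $\mathbb P^{r'+s',0}$, where every linear span is a nondegenerate subspace and orthogonal complements always yield orthogonal direct sums. It then projects onto $K=\mathrm{span}\,\phi(U)$, evaluates the polarized identity at $\dim K$ points with linearly independent images, and solves the resulting linear system to conclude that each $\psi'_j/h_2$ is linear; repeating with the span of the image of the other map linearizes $\phi'/h_1$. This never requires the orthogonality relation your argument lacks. To salvage your approach you would need the same positive-definiteness reduction and you would have to build $V$ and $W$ from the spans of the images of the full maps $\tilde\phi$ and $\tilde\psi$, accepting that the standard part is a projection of the linear part rather than the linear part itself.
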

\begin{proof}

The result concerning the null case is a direct consequence of the definition of a null orthogonal map.

By Proposition \ref{multiplier prop}, write $\phi=[\phi_1(z),\cdots,\phi_{r'+s'+t'}(z)]$ and $\psi=[\psi_1(z),\cdots,\psi_{r'+s'+t'}(z)]$.
Then we have $$\sum_{j=1}^{r'}\phi_j(z)\overline{\psi_j(z)}-\sum_{j=r'+1}^{r'+s'}\phi_j(z)\overline{\psi_j(z)}=\|z\|_{r,s,t}^2H(z,\bar z).$$

We present a proof for the "if" part as the "only if" statement is obvious. Assume that there exists two non-zero holomorphic functions $h_1(z)$ and $h_2(z)$ such that $H(z,\bar z)=h_1(z)\overline{h_2(z)}\not\equiv 0$. So $(\phi,\psi)$ is not null.

Let $\pi$ be the canonical projection from $\mathbb P^{r',s',t'}$ to $\mathbb P^{r',s'}$ as a rational map. Obviously,
$\langle\phi,\psi\rangle_{r',s',t'}=\langle\pi\circ\phi,\pi\circ\psi\rangle_{r',s'}$. In addition, $(\phi,\psi)$ is quasi-standard if and only if $(\pi\circ\phi,\pi\circ\psi)$ is quasi-standard.
  Therefore, we only need to prove the statement for $t'=0$.

Let's assume that $t'=0$ from now on, i.e $(\phi,\psi)$ is a local orthogonal pair from $\mathbb P^{r,s,t}$ to $\mathbb P^{r',s'}$ . Define the map $\Psi=[\psi_1(z),\cdots,\psi_{r'}(z),$ $-\psi_{r'+1}(z),\cdots,-\psi_{r'+s'}(z)]$.  Then $(\phi, \Psi)$ is a local orthogonal pair from $\mathbb P^{r,s,t}$ to $\mathbb P^{r'+s',0}$.  It is worth noting that $(\phi, \psi)$ is quasi-linear (repectively, null) if and only if  $(\phi, \Psi)$ is quasi-linear (repectively, null).
Denote the linear span of $\phi(U)$ by $K$ with $k=\dim K$. Since the Hermitian form in $\mathbb P^{r'+s',0}$ is positive, then composition  $(\pi_K\circ\phi, \pi_K\circ\Psi)$ is a local orthogonal pair from $\mathbb P^{r,s,t}$ to $K\cong \mathbb P^{k+1,0}$. On the other hand, $(\pi_K^{\perp}\circ\phi, \pi_K^{\perp}\circ\Psi)$ is null where $\pi_K$ and $\pi_K^{\perp}$ are the projection from $\mathbb P^{r'+s',0}$
to $K$ and $K^{\perp}$, respectively. By Proposition \ref{multiplier prop}, write $\pi_K\circ\phi=[\phi'_1(z),\cdots,\phi'_{k+1}(z)]$ and $\pi_K\circ\Psi=[\psi'_1(z),\cdots,\psi'_{k+1}(z)]$. Then 

$$\sum_{j=1}^{k+1}\phi'_j(z)\overline{\psi'_j(z)}=\sum_{j=1}^{r'}\phi_j(z)\overline{\psi_j(z)}-\sum_{j=r'+1}^{r'+s'}\phi_j(z)\overline{\psi_j(z)}=\|z\|_{r,s,t}h_1(z)\overline{h_2(z)}.$$

Polarizing the above equation, we get 
 $$\sum_{j=1}^{k+1}\frac{\phi'_j(z)}{h_1(z)}\frac{\overline{\psi'_j(w)}}{\overline{h_2(w)}}=\langle z,w\rangle_{r,s,t}.$$

As the linear span $\pi_K\circ\phi(U)$ is $K$, we can choose $(k+1)$ points $\{p_i\}_{i=1}^{k+1}$ in  $U$ to get $(k+1)$-linearly independent equations.
\begin{equation}
\begin{cases}
	\sum_{j=1}^{k+1}\frac{\phi'_j(p_1)}{h_1(p_1)}\frac{\overline{\psi'_j(w)}}{\overline{h_2(w)}}=\langle p_1,w\rangle_{r,s,t}\\
	\cdots\\
	\sum_{j=1}^{k+1}\frac{\phi'_j(p_{k+1})}{h_1(p_{k+1})}\frac{\overline{\psi'_j(w)}}{\overline{h_2(w)}}=\langle p_{k+1},w\rangle_{r,s,t}\\
\end{cases}	
\end{equation}

 We can solve these equations and obtain each  $\frac{\overline{\psi'_j(w)}}{\overline{h_2(w)}}$ is linear for $1\le j\le k+1$.Therefore, $\pi_K\circ\Psi$ is a linear map.
 
 Denote the linear span of $\pi\circ\Psi(U)$ by $L$. 
 Similarly, $(\Pi_L\circ\pi_K\circ\phi, \Pi_L\circ\pi_K\circ\Psi)$ is a local orthogonal map from $\mathbb P^{r,s,t}$ to $L$ and $(\Pi_L^{\perp}\circ\pi_K^{\perp}\circ\phi, \Pi_L^{\perp}\circ\pi_K^{\perp}\circ\Psi)$ is null where  $\Pi_L$ and $\Pi_L^{\perp}$ are the projection from $K$ to $L$ and $L^{\perp}$, respectively. Using the same method, we can solve some equations and get that $\Pi_L\circ\pi_K\circ\phi$ is a linear map. It is not difficult to prove $(\Pi_L\circ\pi_K\circ\phi, \Pi_L\circ\pi_K\circ\Psi)$ is a standard pair when $\Pi_L\circ\pi_K\circ\phi$, $\Pi_L\circ\pi_K\circ\Psi$ are linear maps and $(\Pi_L\circ\pi_K\circ\phi, \Pi_L\circ\pi_K\circ\Psi)$ is not null. 
 Therefore, $(\phi,\psi)$ is quasi-standard pair.
\end{proof}

Since $(\phi,\phi)$ is a local orthogonal pair if and only if $\phi$ is a local orthogonal map, it is very natural to study orthogonal maps by analyzing 
orthogonal pairs. 

\begin{corollary}\label{multiplier map}
 Let $\phi$ be an orthogonal map from $\mathbb P^{r,s,t}$ to $\mathbb P^{r',s',t'}$ and $H(z,\bar z)$ be the multiplier associated to $\phi$. Then, $\phi$ is null if and only if $H(z,\bar z)\equiv 0$.   
   Additionally, $\phi$ is quasi-standard if and only if there exists a non-zero holomorphic function $h(z)$ such that $H(z,\bar z)=\|h(z)\|^2$.
\end{corollary}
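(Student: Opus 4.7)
The plan is to apply Proposition \ref{map rank} to the pair $(\phi,\phi)$ and then refine the resulting factorization using the realness of the multiplier that is automatic when $\psi=\phi$.

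First, I would observe that by the definitions in Section 2, a map $\phi$ is a null (resp.\ quasi-standard) orthogonal map precisely when the pair $(\phi,\phi)$ is null (resp.\ quasi-standard), and the multiplier of $\phi$ coincides with the multiplier of $(\phi,\phi)$. The null half of the corollary is then immediate from the null half of Proposition \ref{map rank}.

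For the quasi-standard half, the ``only if'' direction (taking $h_1=h_2=h$) follows directly from the proposition. In the ``if'' direction, Proposition \ref{map rank} applied to $(\phi,\phi)$ yields non-zero holomorphic functions $h_1,h_2$ with $H(z,\bar z)=h_1(z)\overline{h_2(z)}$. Since $\psi_j=\phi_j$, the defining equation for the multiplier reads
\begin{equation*}
\sum_{j=1}^{r'}|\phi_j(z)|^2-\sum_{j=r'+1}^{r'+s'}|\phi_j(z)|^2=\|z\|_{r,s,t}^2\,H(z,\bar z),
\end{equation*}
whose left-hand side is manifestly real-valued, so $H$ is real-valued. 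Consequently $h_1\overline{h_2}=\overline{h_1}h_2$, meaning the meromorphic quotient $h_1/h_2$ coincides with its complex conjugate wherever defined; being a holomorphic function equal to its own conjugate, it must be a nonzero real constant $c$. Thus $H(z,\bar z)=c\,|h_2(z)|^2$.

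The step that I expect to require the most care is extracting the exact form $H=\|h\|^2$ from the factorization $H=c|h_2|^2$. When $c>0$, setting $h:=\sqrt{c}\,h_2$ finishes the argument. The possibility $c<0$ would correspond, in the quasi-standard decomposition supplied by Proposition \ref{map rank}, to a linear conformal map $\Phi$ with $\|\Phi(z)\|^2_{r',s',t'}=\lambda\|z\|^2_{r,s,t}$ for some $\lambda<0$; a short signature-matching argument on the restriction of $\Phi$ to a positive (resp.\ negative) subspace of $\mathbb C^{r,s,t}$ should rule out this possibility (or be absorbed into the sign convention used for $\|h\|^2$ in the codomain), completing the proof.
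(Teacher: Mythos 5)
Your overall route is the same as the paper's: the corollary is stated there without proof, as an immediate consequence of Proposition \ref{map rank} applied to the pair $(\phi,\phi)$, and your observation that the realness of $H$ forces $h_1\overline{h_2}=\overline{h_1}h_2$, hence $h_1=c\,h_2$ for a real constant $c$ and $H=c|h_2|^2$, is exactly the detail that needs to be supplied. The null case and the ``only if'' direction are handled correctly.

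The one place your argument does not close is the final sign step, and in fact it cannot be closed as you describe: there \emph{are} quasi-standard orthogonal maps with $c<0$. The definition of a standard pair allows the conformal factor $\lambda$ to be any element of $\mathbb R^*$, and for indefinite signatures negative $\lambda$ genuinely occurs. For instance $\phi([z_1,z_2])=[z_2,z_1]$ from $\mathbb P^{1,1}$ to $\mathbb P^{1,1}$ is a standard orthogonal map induced by a linear conformal map with $\lambda=-1$, and its multiplier is $H\equiv -1$, which is not of the form $|h|^2$; since the multiplier of an orthogonal map is only determined up to a \emph{positive} factor, the sign cannot be normalized away. A signature-matching argument only excludes $c<0$ when the target form is definite (e.g.\ $s'=0$), because then $\|\Phi(z)\|^2\ge 0$ forces $\lambda>0$. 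So either the conclusion should be weakened to $H(z,\bar z)=\pm|h(z)|^2$ (equivalently, $H$ has rank $1$, which is how the paper actually uses the corollary in the proof of Corollary \ref{map deg}), or an additional hypothesis excluding negative conformal factors is needed. This is an imprecision already present in the statement itself rather than a defect peculiar to your argument, but your claim that the case $c<0$ ``should be ruled out'' is not correct as written.
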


From Proposition \ref{map rank}, we can get the rigidities of orthogonal pairs by studying the multipliers. 
 
 \begin{lemma}\label{restriction}
 Let $F(z,\bar z)$ be a complex-valued real analytic function over $\mathbb C^n$. If for any general hyperplane $\Xi$ in $\mathbb C^n$, there exists two holomorphic functions $f_\Xi(z)$,$g_\Xi(z)$ such that $F(z,\bar z)|_\Xi=f_\Xi(z)\overline{g_\Xi(z)}$, then there exist two holomorphic functions $f(z)$,$g(z)$ such that  $F(z,\bar z)=f(z)\overline{g(z)}$.
 \end{lemma}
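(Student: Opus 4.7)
The plan is to polarize the rank-one equation on each hyperplane and recover a global rank-one factorization of $F$ from its values at a single base point. Write $F(z,\bar w)$ for the polarization of $F(z,\bar z)$, which is holomorphic in $z$, antiholomorphic in $w$, and agrees with $F$ on the diagonal $w=z$. By uniqueness of polarization, the hypothesis $F|_\Xi(z,\bar z)=f_\Xi(z)\overline{g_\Xi(z)}$ lifts to
\begin{equation*}
F(z,\bar w)=f_\Xi(z)\,\overline{g_\Xi(w)}\quad\text{for all } z,w\in\Xi,
\end{equation*}
for every general hyperplane $\Xi\subset\mathbb C^n$.

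If $F\equiv 0$ the conclusion is trivial, so fix $z_0$ with $F(z_0,\bar z_0)\ne 0$. For any general hyperplane $\Xi$ containing $z_0$, evaluating the above factorization at the four pairs $(z,z_0)$, $(z_0,z)$, $(z,z)$, and $(z_0,z_0)$ with $z\in\Xi$ gives the four equations $F(z,\bar z_0)=f_\Xi(z)\overline{g_\Xi(z_0)}$, $F(z_0,\bar z)=f_\Xi(z_0)\overline{g_\Xi(z)}$, $F(z,\bar z)=f_\Xi(z)\overline{g_\Xi(z)}$ and $F(z_0,\bar z_0)=f_\Xi(z_0)\overline{g_\Xi(z_0)}$. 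Multiplying the first two and matching with the product of the last two cancels the $\Xi$-dependent data, yielding the key $\Xi$-independent identity
\begin{equation*}
F(z,\bar z_0)\,F(z_0,\bar z) \;=\; F(z,\bar z)\,F(z_0,\bar z_0) \qquad (z\in\Xi).
\end{equation*}
Set $f(z):=F(z,\bar z_0)$, which is holomorphic in $z$, and define $g$ by $\overline{g(z)}:=F(z_0,\bar z)/F(z_0,\bar z_0)$, which is antiholomorphic in $z$ so that $g$ itself is holomorphic. The key identity then becomes $F(z,\bar z)=f(z)\overline{g(z)}$ for every $z\in\Xi$.

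As $\Xi$ ranges over the general hyperplanes through $z_0$, these hyperplanes sweep out an open dense subset of $\mathbb C^n$: the linear family of hyperplanes through $z_0$ has positive dimension $n-1$ (for $n\ge 2$) and the generic condition removes only a proper analytic subfamily, so every point outside a thin set lies on some general hyperplane through $z_0$. Since both sides of $F(z,\bar z)=f(z)\overline{g(z)}$ are real-analytic on $\mathbb C^n$, the identity extends from this dense open set to all of $\mathbb C^n$ by the identity principle.

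The main obstacle I anticipate is coupling the genericity condition with the base-point construction: one must ensure that enough general hyperplanes pass through the fixed $z_0$ to sweep out a dense subset of $\mathbb C^n$, and that the locally-defined $f$ and $g$ extend globally to holomorphic functions on $\mathbb C^n$. The density is straightforward for $n\ge 2$ since the pencil of hyperplanes through $z_0$ is positive-dimensional; the global holomorphic extension of $f$ and $g$ uses the simply-connectedness of $\mathbb C^n$ and the identity itself to propagate the local factorization globally.
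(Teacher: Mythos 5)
Your argument is correct and reaches the conclusion by a genuinely different route from the paper. The paper's proof is a two-line logarithm argument: from $F|_\Xi=f_\Xi\overline{g_\Xi}$ it concludes $\partial\bar\partial\log F|_\Xi\equiv 0$ for every general hyperplane $\Xi$, hence $\partial\bar\partial\log F\equiv 0$ (since at each point every tangent $(1,1)$-direction is seen by some general hyperplane through it), so $\log F$ splits locally as a holomorphic plus an antiholomorphic function and $F=f\bar g$. You instead polarize, fix a base point $z_0$ with $F(z_0,\bar z_0)\neq 0$, and extract the $\Xi$-independent identity $F(z,\bar z_0)F(z_0,\bar z)=F(z,\bar z)F(z_0,\bar z_0)$, which hands you explicit formulas $f(z)=F(z,\bar z_0)$ and $\overline{g(z)}=F(z_0,\bar z)/F(z_0,\bar z_0)$. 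Your approach buys concreteness: it avoids choosing branches of $\log$ of a complex-valued function and produces the factors directly rather than through the (unproved in the paper) equivalence between $\partial\bar\partial\log F=0$ and decomposability. The cost is that you must restrict to hyperplanes through the fixed $z_0$ and argue that "general" survives this restriction, and that the polarization $F(z,\bar w)$ is a priori defined only near the diagonal, so $f$ and $g$ are initially local near $z_0$; your closing remarks acknowledge but do not fully discharge these points. Both of these are comparable in severity to gaps the paper's own proof leaves implicit (it likewise needs enough general hyperplanes through each point, and $\log F$ is only defined off the zero set of $F$), and for the way the lemma is actually used --- a local factorization of the multiplier feeding into Proposition 3.3 --- your local statement suffices. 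One small point worth making explicit: the lift of $F|_\Xi=f_\Xi\overline{g_\Xi}$ to the polarized identity on $\Xi\times\Xi$ is justified because both sides are holomorphic in $z$ and antiholomorphic in $w$ on the complex manifold $\Xi\times\Xi$ and agree on the diagonal, which is a maximally totally real submanifold there.
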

 
 \begin{proof}
 Since $F(z,\bar z)$ is real analytic function, $F(z,\bar z)|_\Xi $ is also real analytic on $\Xi$. Since $F(z,\bar z)|_\Xi=f_\Xi(z)\overline{g_\Xi(z)}$ for any hyperplane $\Xi\subset \mathbb C^n$, then $\partial\overline{\partial}\log F(z,\bar z)|_\Xi \equiv 0 $. Hence, $\partial\overline{\partial}\log F(z,\bar z)\equiv 0 $, which means $\log F(z,\bar z)$ is a sum of a holomorphic function and an anti-holomorphic function. Hence there exist two holomorphic functions $f(z)$, $g(z)$ such that  $F(z,\bar z)=f(z)\overline{g(z)}$.

 \end{proof}

 Theorem \ref{degenerate pair} explores that rigidity of orthogonal pairs between generalized balls with Levi-degenerate boundaries can be deduced from their counterparts involving generalized balls with Levi-non-degenerate boundaries.

\begin{proof}[Proof of Theorem \ref{degenerate pair}]:

Let $\pi$ be the projection from $\mathbb P^{r',s',t'}$ to $\mathbb P^{r',s'}$ as
a rational map. Then $(\pi\circ\phi,\pi\circ\psi)$ is an local orthogonal map from $\mathbb P^{r,s,t}$ to $\mathbb P^{r',s'}$. In addition, $(\phi,\psi)$ is quasi-standard (respectively, null) if and only if $(\pi\circ\phi,\pi\circ\psi)$ is quasi-standard(respectively, null). Therefore, we only need to prove the results for $t'=0$.

Let $(\phi,\psi)$ be a local orthogonal pair from $\mathbb P^{r,s,t}$ to $\mathbb P^{r',s'}$.
By Proposition \ref{multiplier prop},
write $\phi:=[\phi_1,\cdots,\phi_{r'+s'}]$  and $\phi:=[\psi_1,\cdots,\psi_{r'+s'}]$ and let  $H(z,\bar z)$ be the multiplier of $\phi$, i.e.
    $$
    \sum_{i=1}^{r'}\phi_i(z)\overline{\psi_i(z)}-\sum_{i=r'+1}^{r'+s'}\phi_i(z)\overline{\psi_i(z)}=H(z,\bar z)\|z\|_{r,s,t}^2.
    $$

 Denote by $(\phi^{V},\psi^{V})$ the restriction of $(\phi,\psi)$ on an $(r,s)$-subspace $V$. Note that any general $(r+s-1)$-plane is an $(r,s)$-subspace in $\mathbb P^{r,s}$. Then $(\phi^{V},\psi^{V})$ is a local orthogonal pair from $\mathbb P^{r,s}$ to $\mathbb P^{r',s'}$.

By assumption, any orthogonal pair from $\mathbb P^{r,s}$ to $\mathbb P^{r',s'}$ is quasi-standard or null, which implies $(\phi^{V},\psi^{V})$ is quasi-standard or null. If $(\phi^{V},\psi^{V})$ is null for any choice of $V$, then from Proposition \ref{map rank}, $H(z,\bar z)|_V\equiv 0$ for any an $(r,s)$-subspace $V$ in $\mathbb P^{r,s,t}$. Hence, $H(z,\bar z)\equiv 0$. So $(\phi,\psi)$ is null.
If $(\phi^{V},\psi^{V})$ is quasi-standard for some $V$, then for a general choice of $V$, it is quasi-standard. Therefore, from Proposition \ref{map rank}, there exist two holomorphic functions $f_V(z)$ and $g_V(z)$ such that  $H(z,\bar z)|_V=f_V(z)\overline{g_V(z)}$ for any $(r,s)$-subspace $V$ in $\mathbb P^{r,s,t}$. Using Lemma \ref{restriction} several times, there exist two holomorphic functions $f(z)$ and $g(z)$ such that  $H(z,\bar z)=f(z)\overline{g(z)}$. Due to Proposition \ref{map rank}, $(\phi,\psi)$ is quasi-standard.

\end{proof}

The majority of existing results on the rigidity properties of proper holomorphic maps primarily concentrate on cases where the source domains are generalized balls with Levi-non-degenerate boundaries (See\cite{BEH},\cite{BH}, \cite{HLTX},etc). The version of Theorem \ref{degenerate pair} for orthogonal map is Corollary \ref{map deg}, which is the main theorem in \cite{HX2}. Actually, Corollary 
\ref{map deg} can be deduced from Theorem \ref{degenerate pair}. We also give an alternative simple proof here.

\begin{proof}[Proof of Corollary \ref{map deg}]:

Similar to the proof of Theorem \ref{degenerate pair}, it suffices to prove the statement for the case when $t'=0$.
By Proposition \ref{multiplier prop},
write a local orthogonal map as $\phi:=[\phi_1,\cdots,\phi_{r'+s'}]$ from $\mathbb P^{r,s,t}$ to $\mathbb P^{r',s'}$ and let  $H(z,\bar z)$ be the multiplier of $\phi$, i.e.
    $$
    \sum_{i=1}^{r'}|\phi_i(z)|^2-\sum_{i=r'+1}^{r'+s'}|\phi_i(z)|^2=H(z,\bar z)\|z\|_{r,s,t}^2.
    $$

    The multiplier $H(z,\bar z)$ has a decomposition of the following form:
    $$H(z,\bar z)=|a_1(z)|^2+\cdots+|a_p(z)|^2-|b_1(z)|^2-\cdots-|b_q(z)|^2,$$
    where $a_i$, $b_j$ are homogeous holomorphic functions with the same degree and $\mathbb C$-linearly independent (See \cite{Da}, P33). And $p$, $q$ can be infinity. Then $H(z,\bar z)$ defines a holomorphic map $\mathcal A$ from $\mathbb P^{r,s,t}$ to $\mathbb P^{p+q-1}$ with $\mathcal A(z)=[a_1(z),\cdots,a_p(z)$, $b_1(z),\cdots, b_q(z)]$.

 Denote by $\phi^{V}$ the restriction of $\phi$ on an $(r,s)$-subspace $V$. Any general $(r+s-1)$-plane is an $(r,s)$-subspace in $\mathbb P^{r,s}$. Then $\phi^{V}$ is a local orthogonal map from $\mathbb P^{r,s}$ to $\mathbb P^{r',s'}$.  By assumption, $\phi^{V}$ is quasi-standard or null. From Corollary \ref{multiplier map}, the multiplier of $\phi^{V}$ is either $0$ or rank $1$. Then either $H(z,\bar z)|_{V}=0$ or the holomorphic map $\mathcal A|_V$ is a constant map.

 When $H(z,\bar z)\mid_V=0$, then $H(z,\bar z)\equiv 0$. Hence $\phi$ is null.
 
When $\mathcal A|_V$ is a constant map for a general choice of $V$, then it follows that $\mathcal A$ is a constant map, i.e. $H(z,\bar z)$ has rank $1$. Then $\phi$ is quasi-standard from Corollary \ref{multiplier map}.

\end{proof}

In order to give some new rigidities of orthogonal pairs, we recall some results about the rigidity of orthogonal pairs and rewrite them in \cite{Ga} for our convenience.

\begin{theorem}\label{Segre}
	(Theorem 1.3 \cite{Ga})
	Let $r,s,n'$ be non-negative integers
	and  $F$ be a local orthogonal pair from $\mathbb P^{r,s}$ to $\mathbb P^{n',0}$. If $n'\le 2(r+s)-3$, $F$ is quasi-standard or null.
	
\end{theorem}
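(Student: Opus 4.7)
The plan is to deduce this theorem as an almost immediate corollary of Theorem \ref{Segre} and Theorem \ref{degenerate pair}, so I would organize the proof by splitting on the value of $t$.

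First I would observe that the hypothesis $0 \le n' \le 2(r+s)-3$ forces $r+s \ge 2$, so the regime in which Theorem \ref{degenerate pair} applies is available. Then I would dispose of the trivial case $t=0$: here $\mathbb P^{r,s,t} = \mathbb P^{r,s}$ and the target is $\mathbb P^{n',0}$, so the conclusion is precisely Theorem \ref{Segre}.

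For the main case $t \ge 1$, I would apply Theorem \ref{degenerate pair} with $r' = n'$, $s' = 0$, $t' = 0$. The only hypothesis required in that specialization is that every local orthogonal pair from $\mathbb P^{r,s}$ to $\mathbb P^{n',0}$ is quasi-standard or null, and this is supplied verbatim by Theorem \ref{Segre} under the assumption $n' \le 2(r+s)-3$. The conclusion of Theorem \ref{degenerate pair} then reads that every local orthogonal pair $(\phi,\psi)$ from $\mathbb P^{r,s,t}$ to $\mathbb P^{n',0}$ is quasi-standard or null, which is exactly what we want.

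The only real bookkeeping point — rather than a genuine obstacle — is that Theorem \ref{degenerate pair} is phrased for \emph{positive} integers $t$ and $t'$, whereas here we need $t' = 0$. I would address this by noting that the first step in the proof of Theorem \ref{degenerate pair} is to pass from the target $\mathbb P^{r',s',t'}$ to $\mathbb P^{r',s'}$ via the canonical rational projection, which is simply the identity when $t' = 0$; hence the same argument applies unchanged. Equivalently, one can embed $\mathbb P^{n',0}$ inside $\mathbb P^{n',0,1}$ as a linear subspace, apply Theorem \ref{degenerate pair} in the case $t' = 1$, and observe that the quasi-standard/null dichotomy is preserved by this embedding. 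Beyond this routine verification, I do not expect any substantive difficulty: the theorem is essentially the concatenation of the nondegenerate Segre-target rigidity (Theorem \ref{Segre}) with the degenerate-source reduction mechanism (Theorem \ref{degenerate pair}).
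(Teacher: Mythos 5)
There is a fundamental problem with your proposal: it is circular. The statement you were asked to prove is Theorem~\ref{Segre} itself, whose source space is $\mathbb P^{r,s}$ --- the Levi-non-degenerate case, with no degeneracy index $t$ in sight. Your argument, however, treats the statement as if the source were $\mathbb P^{r,s,t}$ for general $t$, splits on the value of $t$, and in the case $t=0$ (which is the \emph{entire} content of the statement in question) you ``dispose of'' it by citing Theorem~\ref{Segre}. In other words, you prove the theorem by assuming it. What you have actually written is a proof of Theorem~\ref{main} --- and indeed essentially the paper's own one-line deduction of that result from Theorem~\ref{Segre} and Theorem~\ref{degenerate pair}, together with a reasonable handling of the $t'=0$ bookkeeping --- but it is not a proof of Theorem~\ref{Segre}.

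The genuine content of Theorem~\ref{Segre} is the non-degenerate-source rigidity itself: that a local orthogonal pair from $\mathbb P^{r,s}$ to $\mathbb P^{n',0}$ with $n'\le 2(r+s)-3$ must be quasi-standard or null. The reduction mechanism of Theorem~\ref{degenerate pair} cannot help here, since it only transfers rigidity from the non-degenerate source to the degenerate one; it presupposes, rather than supplies, exactly this base case. The paper does not prove the base case either --- it imports it verbatim as Theorem~1.3 of \cite{Ga}, where the proof is a substantive standalone argument about orthogonal pairs over a non-degenerate Hermitian form. If you are permitted to quote \cite{Ga}, the correct ``proof'' is a one-line citation; if you are expected to argue from scratch, you would need to reproduce the work of \cite{Ga}, and nothing in your proposal addresses that.
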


The following theorem about rigidities of orthogonal pairs follows directly from the combination of Theorem \ref{Segre} and Theorem \ref{degenerate pair}.
\begin{theorem}\label{main}
Let $r,s,t,n'$ be non-negative integers such that $ n'\le 2(r+s)-3$.
A local orthogonal pair $F=(\phi,\psi)$ from $\mathbb P^{r,s,t}$ to $\mathbb P^{n',0}$ is quasi-standard or null.

\end{theorem}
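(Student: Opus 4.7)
The plan is to derive Theorem \ref{main} as an almost immediate corollary of Theorem \ref{Segre} combined with Theorem \ref{degenerate pair}; no independent argument is needed, and the only work is verifying that the hypotheses of the two earlier theorems match the setup here. Since $n'$ is a non-negative integer with $n'\le 2(r+s)-3$, we must have $r+s\ge 2$, which is exactly the hypothesis required by Theorem \ref{degenerate pair}. In the codomain I read $\mathbb P^{n',0}$ as $\mathbb P^{r',s'}$ with $r'=n'$ and $s'=0$, so there are no negative directions and no degeneracy in the target.

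Next I would split on $t$. If $t=0$, then $(\phi,\psi)$ is already a local orthogonal pair from $\mathbb P^{r,s}$ to $\mathbb P^{n',0}$ with $n'\le 2(r+s)-3$, and Theorem \ref{Segre} applies verbatim to conclude that $F$ is quasi-standard or null. If $t\ge 1$, I would feed Theorem \ref{Segre} into Theorem \ref{degenerate pair}: the hypothesis of Theorem \ref{degenerate pair}, namely that every local orthogonal pair from $\mathbb P^{r,s}$ to $\mathbb P^{n',0}$ is quasi-standard or null, is furnished precisely by Theorem \ref{Segre}, and its conclusion then promotes this rigidity from $\mathbb P^{r,s}$ to $\mathbb P^{r,s,t}$, yielding that $(\phi,\psi)$ itself is quasi-standard or null.

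There is no serious obstacle since the content is entirely contained in the two earlier theorems. The one small point to verify is that Theorem \ref{degenerate pair} accepts $t'=0$ on the target side; inspecting its proof, the very first step is a rational projection $\pi\colon\mathbb P^{r',s',t'}\to\mathbb P^{r',s'}$ that collapses the $t'$ direction, so the case $t'=0$ is in fact the core of that argument and requires no modification. Thus the only thing worth highlighting in the write-up is the compatibility of hypotheses and the $t=0$ versus $t\ge 1$ dichotomy; everything else is a direct citation.
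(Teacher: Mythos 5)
Your proposal is correct and matches the paper's own argument, which likewise obtains Theorem \ref{main} directly by feeding Theorem \ref{Segre} into Theorem \ref{degenerate pair}. Your additional checks --- that $n'\le 2(r+s)-3$ forces $r+s\ge 2$, the $t=0$ versus $t\ge 1$ dichotomy, and that the target case $t'=0$ is compatible with Theorem \ref{degenerate pair} --- are sensible bookkeeping that the paper leaves implicit.
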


\section{Proof of Theorem \ref{pair}}

In this section, we present some results that generalize Huang's Lemma by analyzing orthogonal pairs. The following proposition serves as a bridge between Huang's Lemma and rigidities of orthogonal pairs between projective spaces.

\begin{proposition}\label{pair rank}
For $r,s,t,n'\in \mathbb Z_{\ge 0}$ such that $n'\ge {r+s}\ge 2$,
        let $(\phi,\psi)$ be a local orthogonal pair from $\mathbb P^{r,s,t}$ to $\mathbb P^{n',0}$. 
         If $(\phi,\psi)$ is quasi-standard, then there exist holomorphic
functions, $h_1(z)$,$h_2(z)$ $\{\Phi_i\}^q_{i=1}$, $\{\Psi_i\}^q_{i=1}$, where
$q=n'-r-s$ and  matrices $B$, $C$ satisfying $\overline
{C^T}B=H_{r,s,q}$ such that
$$
[\phi_1,\cdots,\phi_{n'}]^T=B[h_1(z)z_1,\cdots, h_1(z)z_{r+s},\Phi_1,\cdots,\Phi_q]^T,
$$
$$
[\psi_1,\cdots,\psi_{n'}]^T=C[h_2(z)z_1,\cdots,  h_2(z)z_{r+s},\Psi_1,\cdots,\Psi_q]^T.
$$

\end{proposition}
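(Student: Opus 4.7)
The plan is to extract a linear conformal pair from the quasi-standard structure of $(\phi,\psi)$ and then assemble the resulting data into $B$ and $C$ via appropriately biorthogonal bases of $\mathbb{C}^{n'}$.

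First, I would invoke Proposition~\ref{map rank}: the quasi-standard hypothesis produces the factorization $H(z,\bar z)=h_1(z)\overline{h_2(z)}$, and the definition of quasi-standard supplies an orthogonal decomposition $\mathbb{C}^{n',0}=V\oplus W$ for which $(\pi^V\!\circ\phi,\pi^V\!\circ\psi)$ is standard---so after lifting, $\phi^V=h_1 L_\phi(z)$ and $\psi^V=h_2 L_\psi(z)$ for a linear conformal pair $L_\phi,L_\psi:\mathbb{C}^{r,s,t}\to V$---while $(\phi^W,\psi^W)$ is null. Writing $L_\phi(z)=\sum_i d_i z_i$ and $L_\psi(w)=\sum_j d'_j w_j$, the conformal condition forces $\langle d_i,d'_j\rangle_{n',0}=\lambda\epsilon_i\delta_{ij}$, where $\epsilon_i\in\{+1,-1,0\}$ records the sign pattern of $H_{r,s,t}$; rescale so $\lambda=1$. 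Nondegeneracy of $H_{r,s}$ makes $d_1,\ldots,d_{r+s}$ and $d'_1,\ldots,d'_{r+s}$ each linearly independent, and with $U_1:=\mathrm{span}\{d_i\}_{i\le r+s}$, $U_1':=\mathrm{span}\{d'_j\}_{j\le r+s}$, one obtains $\mathbb{C}^{n'}=U_1\oplus(U_1')^\perp=U_1'\oplus U_1^\perp$.

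Second, I would split $\phi=\sum_{i\le r+s}d_i(h_1 z_i)+R_\phi$ with $R_\phi:=h_1\sum_{i>r+s}d_i z_i+\phi^W$, and symmetrically $\psi=\sum_{j\le r+s}d'_j(h_2 w_j)+R_\psi$. Since $d_i\perp U_1'$ for $i>r+s$ and $W\perp V\supset U_1'$, we get $R_\phi(z)\in(U_1')^\perp$; symmetrically $R_\psi(w)\in U_1^\perp$. The cross pairing $\langle R_\phi(z),R_\psi(w)\rangle_{n',0}$ vanishes termwise: its $V$-$V$ pieces vanish because $\langle d_i,d'_j\rangle=0$ whenever $i$ or $j>r+s$, its $V$-$W$ cross pieces vanish by $V\perp W$, and its $W$-$W$ piece vanishes by the null condition. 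Hence $\mathrm{span}\{R_\phi(z):z\}\perp\mathrm{span}\{R_\psi(w):w\}$ in $\mathbb{C}^{n',0}$.

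Finally, let the first $r+s$ columns of $B$ be $d_1,\ldots,d_{r+s}$ and its last $q=n'-r-s$ columns be a basis of $\mathrm{span}\{R_\phi(z)\}\subset(U_1')^\perp$, padded by zero columns as needed; define $\Phi_1,\ldots,\Phi_q$ as the coordinates of $R_\phi$ in those last columns. Mirror the construction with $d'_j$ and $R_\psi$ to obtain $C$ and $\Psi_1,\ldots,\Psi_q$. Then $\phi=B(h_1 z_1,\ldots,h_1 z_{r+s},\Phi_1,\ldots,\Phi_q)^T$ and its $\psi$-counterpart hold by construction, and $\overline{C^T}B=H_{r,s,q}$ follows from a block computation: the top-left $(r+s)\times(r+s)$ block is $H_{r,s}$ via $\langle d_j,d'_i\rangle=\epsilon_j\delta_{ij}$; the two off-diagonal blocks vanish because $B$'s extra columns lie in $(U_1')^\perp$ and $C$'s in $U_1^\perp$; and the bottom-right $q\times q$ block vanishes from $R_\phi\perp R_\psi$ together with zero padding. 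The main obstacle is this last orthogonality between the extra-column spans---it is essential to use the \emph{same} quasi-standard decomposition $V\oplus W$ for both $\phi$ and $\psi$ and to exploit simultaneously the kernel-direction vanishing in the linear conformal pair and the null condition in $W$; this is exactly what aligns the extra-column blocks of $B$ and $C$ with the zero block of $H_{r,s,q}$.
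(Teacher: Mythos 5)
Your proof is correct and follows essentially the same route as the paper: both extract the linear conformal pair from the quasi-standard decomposition $V\oplus W$ and assemble $B$ and $C$ from the standard part and the null part, with the zero block of $H_{r,s,q}$ coming from the orthogonality of the null-part spans. You supply more detail than the paper's terse ``it follows that'' conclusion --- notably the explicit block verification of $\overline{C^T}B=H_{r,s,q}$ and the careful treatment of the degenerate directions $i>r+s$ of the linear conformal pair, which the paper's normalization glosses over --- but the underlying argument is the same.
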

\begin{proof}
    From the definition of quasi-standard pair, there exists an orthogonal decompostion decomposition $\mathbb C^{n',0}=V\oplus V^{\perp}$ such that $(\pi\circ \phi,\pi\circ \psi)$ is a standard pair and $(\pi^{\perp}\circ \phi,\pi^{\perp}\circ \psi)$ is null, in which $\pi$ and $\pi^{\perp}$ are projections from $\mathbb P^{n',0}$ to $\mathbb PV$ and $\mathbb PV^{\perp}$, respectively. After composing a standard orthogonal pair,   $\pi\circ\phi=[z_1,\cdots,z_r,z_{r+1},\cdots,z_{r+s}]$ and $\pi\circ\psi=[z_1,\cdots,z_r,-z_{r+1},\cdots,-z_{r+s}]$.
   Moreover, $\langle \pi^{\perp}\circ\phi(z),\pi^{\perp}\circ\psi(z) \rangle\equiv 0$
for any $z\in U$.
    Let $n'-r-s=q$.
     It follows that there exist holomorphic functions $\Phi_i$, $\Psi_i$ and two matrices $B$, $C$ satisfying $\bar {C^T}B=H_{r,s,q}$ such that
$$
[\phi_1,\cdots,\phi_{n'}]^t=B[h_1(z)z_1,\cdots,h_1(z)z_{r+s},\Phi_1,\cdots,\Phi_q]^t,
$$
$$
[\psi_1,\cdots,\psi_{n'}]^t=C[h_2(z)z_1,\cdots, h_2(z)z_{r+s},\Psi_1,\cdots,\Psi_q]^t.
$$

 \end{proof}

By combining the rigidity properties of orthogonal pairs discussed in Section 3, we are now able to establish the proof for our main Theorem \ref{pair}.

\begin{proof}[Proof of Theorem \ref{pair}]
Any holomorphic functions  $\{\phi_j\}$, $\{\psi_j\}$ and $H(z,\bar z)$ satisfying
$$\sum_{j=1}^{n'}\phi_j(z)\overline{\psi_j(z)}=\|z\|_{r,s}^2H(z,\bar z),$$
can induce a local orthogonal pair from $\mathbb P^{r,s,1}$ to $\mathbb P^{n',0}$ after homogenization, as argued in Section 2.
Then, the result is a direct consequence of Theorem \ref{Segre} and Proposition \ref{pair rank}.
\end{proof}

From Theorem \ref{pair}, it is easy to get the following corollary.

\begin{corollary}\label{map}
    Let $r,s,r',s'\in\mathbb N$
    and $\{\phi_j\}_{j=1}^{r'+s'}$ be holomorphic  functions in $ z\in \mathbb C^{r+s+t}$ near the origin. Assume that $\phi_j(0)=0$ for every $j$.
    Let $H(z,\bar z)$ be a real-analytic function for $z \approx 0$ such that
    $$\sum_{j=1}^{r'}|\phi_j(z)|^2-\sum_{j=r'+1}^{r'+s'}|\phi_j(z)|^2=\|z\|_{r,s,t}^2H(z,\bar z).$$
 
   If $r'+s'\le 2(r+s)-3$, then either $H(z,\bar z)\equiv 0$ or  rank $1$.

Furthermore, when $H(z,\bar z)\not\equiv 0$,
then 
there exist holomorphic functions $h(z)$, $\Phi_1$, $\cdots$, $\Phi_{q}$ where $q:=r'+s'-r-s\geq 0$ and $W\in M(r'+s',r'+s'; \mathbb C)$ satisfying
$\overline W^TH_{r',s'}W=H_{r,s,q}$
such that
$$[\phi_1,\cdots,\phi_{r'+s'}]^T=W
[
h(z)z_1,\cdots,h(z)z_{r+s},\Phi_1,\cdots,\Phi_{q}
]^T.
$$
\end{corollary}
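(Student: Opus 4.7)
The plan is to reduce this corollary to the orthogonal-pair rigidity packaged in Theorem~\ref{main} and the matrix form supplied by Proposition~\ref{pair rank}, using the sign-flip trick from the proof of Proposition~\ref{map rank} together with the homogenization procedure of Section~2. Concretely, I would first set $\psi_j := \phi_j$ for $1\le j\le r'$ and $\psi_j := -\phi_j$ for $r'+1\le j\le r'+s'$, converting the signed-squares hypothesis into the pairing identity $\sum_{j=1}^{r'+s'} \phi_j(z)\,\overline{\psi_j(z)} = \|z\|_{r,s,t}^2 H(z,\bar z)$. Homogenizing by introducing a new variable $z_{r+s+t+1}$ and multiplying both sides by $|z_{r+s+t+1}|^2$ produces, after polarization, an orthogonal pair $(\tilde\phi,\tilde\psi)$ from $\mathbb P^{r,s,t+1}$ to $\mathbb P^{n',0}$ with $n':=r'+s'$, whose multiplier is the homogenization $\tilde H$ of $H$.

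Since by hypothesis $n'\le 2(r+s)-3$, Theorem~\ref{main} forces $(\tilde\phi,\tilde\psi)$ to be either null or quasi-standard. Via Proposition~\ref{map rank}, the former possibility gives $\tilde H\equiv 0$ and hence $H\equiv 0$, whereas the latter gives that $\tilde H$, and hence $H$, has rank $1$; this already proves the dichotomy in the first assertion. In the quasi-standard case, I would then invoke Proposition~\ref{pair rank} on $(\tilde\phi,\tilde\psi)$ to obtain matrices $B,C$ with $\overline{C}^{\,T}B = H_{r,s,q}$ and holomorphic data $h_1,h_2,\{\Phi_i\}_{i=1}^{q},\{\Psi_i\}_{i=1}^{q}$ writing $\tilde\phi,\tilde\psi$ in the standard block form. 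Because $\tilde\psi = H_{r',s'}\tilde\phi$ by construction, one can specialize $C = H_{r',s'} B$, $h_1=h_2=:h$, and $\Psi_i = \Phi_i$ by absorbing holomorphic scalars into the $\Phi_i$'s; the identity $\overline{C}^{\,T}B = H_{r,s,q}$ then rewrites as $\overline{B}^{\,T} H_{r',s'} B = H_{r,s,q}$. Setting $W:=B$ and dehomogenizing via $z_{r+s+t+1}=1$ yields precisely the factorization stated in the corollary.

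The only part I expect to require care is this final bookkeeping: verifying that the freedom in Proposition~\ref{pair rank} (rescaling $h_1,h_2$, changing bases of the null block $\mathrm{span}\{\Phi_i\}$ and $\mathrm{span}\{\Psi_i\}$) is wide enough to enforce the symmetrization $C = H_{r',s'} B$ once one knows a priori that $\tilde\psi$ and $\tilde\phi$ differ only by the sign pattern $H_{r',s'}$. Modulo that manipulation, the genuine rigidity input comes entirely from Theorem~\ref{main}, and no further degenerate-Hermitian analysis beyond what is already developed in Section~3 is required.
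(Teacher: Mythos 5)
Your proposal is correct and follows essentially the same route as the paper: set $\psi=H_{r',s'}\phi$, homogenize to a local orthogonal pair into $\mathbb P^{r'+s',0}$, apply the rigidity theorem to get the null/quasi-standard dichotomy (hence $H\equiv 0$ or rank $1$), and read off the matrix factorization from Proposition~\ref{pair rank}. Your one refinement --- routing through Theorem~\ref{main} rather than the $t=0$ statement of Theorem~\ref{pair} --- is actually necessary here, since the source form $\|\cdot\|_{r,s,t}$ is already degenerate before homogenization, a point the paper's one-line derivation (``From Theorem~\ref{pair}, it is easy to get\dots'') glosses over.
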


{\bf Remark.} 
Using Corollary \ref{map}, we can deduce the original form of Huang's Lemma and Corollary 1.1 in \cite{Xi}. Furthermore, through the combination of Proposition \ref{map rank}, Theorem \ref{degenerate pair} and the rigidity properties established in \cite{GN} and \cite{HLTX}, etc. for orthogonal mappings and proper mappings between generalized balls, we can directly obtain various formulations of Huang's Lemma, including Lemma 2.1 in [HLTX], Lemma 2.2 in [HJY2], and Lemma 2.2 in [HX], etc.


\begin{thebibliography}{999}
	
\bibitem[Al]{Al} H.	Alexander,  Proper holomorphic maps in $\mathbb C^n$. Indiana Univ. Math. J. {\bf 26}, 137-146 (1977)

\bibitem[BEH]{BEH}M. Baouendi,  P.  Ebenfelt and X. Huang: Super-rigidity for CR embeddings of real hypersurfaces into hyperquadrics, Adv. Math. {\bf 219}, 1427-1445, (2008)



    \bibitem[BH]{BH} M. Baouendi and X. Huang, Super-rigidity for holomorphic mappings between hyperquadics with positive signature, J. Diff. Geom. {\bf 69}, no. 2, 379-398 (2005).

    \bibitem[CS]{CS} J. Cima and T. Suridge, Boundary behavior of rational proper maps, Duke Math. J. {\bf 60} , no. 1, 135-138 (1990).

    \bibitem[Da]{Da} J. D'Angelo, Rational sphere map, Birkhauser, Vol.341.

    \bibitem[DX]{DX} J. D'Angelo and M. Xiao, Symmetries in CR complexity theory, Adv. Math. {\bf 313,} 590-627 (2017).

\bibitem[Eb]{Eb} P.  Ebenfelt, On the HJY Gap Conjecture in CR geometry vs. the SOS Conjecture for polynomials. {\it Analysis and Geometry in Several Complex Variables, Contemp.Math.}, {\bf 681}(2017), 125-135, Providence, Amer. Math. Soc.


    \bibitem[EHZ]{EHZ}P. Ebenfelt, X. Huang and D. Zaitsev: The equivalence problem and rigidity for hypersurfaces embedded into hyperquadrics,  Amer. J.  Math. {\bf 127}, No. 1, 169-191, (2005)

    \bibitem[ES]{ES}P. Ebenfelt and R. Shroff: Partial rigidity of CR embeddings of real hypersurfaces into hyperquadrics with small signature difference, Comm. in Anal.  Geom., {\bf 23}, no. 1,159-190, 2015

\bibitem[Fr]{Fr} F. Forstneri\v{c}, {Extending proper holomorphic mappings of positive codimension}, Invent. Math. {\bf 95}, 31-62 (1989).






\bibitem[GN]{GN}Y. Gao and S.-C. Ng, Local orthogonal maps and rigidity of holomorphic mappings between real hyperquadrics, To appear in J. de Math. Pures et Appl. {\it arXiv.} {\bf 2110.04046}



\bibitem[GN1]{GN1} Y. Gao and S.-C. Ng,{ On rational proper mappings among generalized complex balls}, Asian J. Math.
Vol. {\bf 22}, No. 2, 355-380,  (2018)




\bibitem[Ga]{Ga}Y. Gao, {Orthogonal pair and a rigidity problem for segre maps associated to real hyperquadrics}, J.  Geom.  Analysis, (2022) https://doi.org/10.1007/s12220-022-00949-5



\bibitem[HLTX]{HLTX}  X. Huang, Jin Lu, X. Tang and M. Xiao, {Proper mappings between indefinite hyperbolic spaces and type I classical domains},. Trans. Am. Math. Soc. 375, 8465-8481 (2022).


\bibitem[HJY]{HJY} X. Huang, S. Ji and W. Yin, {Recent progress on two problems in several complex variables}, Proceedings of the ICCM 2007, International Press, Vol I, 563575 (2009).

\bibitem[HJY2]{HJY2} X. Huang, S. Ji and W. Yin, On the third gap for proper holomorphic maps between balls. Math. Ann. 358 (2014), 115-142

\bibitem[Hu]{Hu} X. Huang, {On a linearity problem of proper holomorphic maps between balls in complex spaces of different dimensions}, J. Differential Geom., {\bf 51} (1999) 13-33.

\bibitem[HX]{HX} X. Huang and M. Xiao, { Holomorphic mappings between hyperquadrics with positive signature}, Pure and Applied Math. Quarterly, Vol {\bf 18}, No.2, 599-616, (2022).

\bibitem[HX2]{HX2}  X. Huang, and M. Xiao, Rigidity of mappings between degenerate and indefinite hyperbolic spaces,. J Geom Anal 33, 23 (2023). https://doi.org/10.1007/s12220-022-01080-1


\bibitem[Po]{Po} H. Poincarè, Les fonctions analytiques de deux variables et la représentation conforme. Rend. Circ. Mat. Palermo 23, 185-220 (1907)

\bibitem[NZ]{NZ} S.-C. Ng, and Y. Zhu, Rigidity of Proper Holomorphic Maps Among Generalized Balls with Levi-Degenerate Boundaries. J Geom Anal {\bf 31}, 11702-11713 (2021)

\bibitem[St]{St} B. Stensones,{  Proper maps which are Lipschitz up to the boundary}, J. Geom. Anal. 6 {\bf no. 2}, 317-339 (1996).





\bibitem[We]{We}  S.M., Webster: Pseudohermitian structures on a real hypersurface. J. Differential Geom. 13, 25-41, (1978).2


\bibitem[Xi]{Xi} M. Xiao,  A theorem on Hermitian rank and mapping problems, Math. Res. Lett. https://mathweb.
ucsd.edu/~m3xiao/Hermitian-rank-04-02-2021.pdf

\bibitem[XY]{XY} Xiao, M., Yuan, Y.: Holomorphic maps from the complex unit ball to Type IV classical domains. {\it J. Math. Pures Appl. } {\bf 133} (2020), 139-166.

\bibitem[Zh]{Zh} Y. Zhang,  Rigidity and holomorphic Segre transversality for holomorphic segre maps. { Math. Ann. } {\bf 337} (2007), 457--478.


\end{thebibliography}
\end{document}